\newtheorem{theorem}{Theorem}[section]
\newtheorem{definition}[theorem]{Definition}
\newtheorem{proposition}[theorem]{Proposition}
\newtheorem{corollary}[theorem]{Corollary}
\newcommand{\numberset}{\mathbb}
\newcommand{\F}{\numberset{F}}
\newenvironment{sistema}
{\left\lbrace\begin{array}{@{}l@{}}}
	{\end{array}\right.}
\begin{document}

\title{Minimum weight codewords in dual Algebraic-Geometric codes from the Giulietti-Korchm\'aros curve}
\date{}
\author{Daniele Bartoli, Matteo Bonini}

\maketitle

\begin{abstract}
In this paper we investigate the number of minimum weight codewords of some dual Algebraic-Geometric codes associated with the Giulietti-Korchm\'aros maximal curve, by computing the maximal number of intersections between the Giulietti-Korchm\'aros curve and lines, plane conics and plane cubics.
\end{abstract}

{\bf Keywords:} Giulietti-Korchm\'aros curve - Algebraic-Geometric Codes - Weight distribution

\section{Introduction}



Algebraic-Geometric codes (AG codes for short) are an important class of error correcting codes; see \cite{Goppa81,Goppa82,S2009}. 

Let $\mathcal{X}$ be an algebraic curve defined over the finite field $\mathbb{F}_q$ of order $q$. The parameters of the AG codes associated with $\mathcal{X}$ strictly depend on some characteristics of the underlying curve $\mathcal{X}$. In general, curves with many $\mathbb{F}_q$-rational places with respect to their genus give rise to AG codes with good parameters. For this reason maximal curves, that is curves attaining the Hasse-Weil upper bound, have been widely investigated in the literature: for example the Hermitian curve and its quotients, the Suzuki curve, and the Klein quartic; see  \cite{Hansen1987,Matthews2004,Matthews2005,Stichtenoth1988,Tiersma1987,XC2002,XL2000,YK1992,BMZ2017_2}. More recently, AG codes were obtained from the Giulietti-Korchm\'aros curve \cite{GK2009} (GK curve for short), which is the first example of maximal curve shown not to be covered by the Hermitian curve; see \cite{FG2010,CT2016,BMZ2017}.


In general, to know  the weight distribution of a particular code is a hard problem. Even the problem of computing  codewords of minimum weight can be a difficult  task apart from specific cases. In \cite{MPS2016}, following the approach of \cite{Sala,Augot}, the authors compute the number of minimum weight codewords of some dual AG codes from the Hermitian curve by providing an algebraic and geometric description for codewords of a given weight belonging to any fixed affine-variety code.

In this work we deal with AG codes arising from the GK maximal curve. The link between the minimum distance of such  codes and the underlying curve is given by a result of \cite{Couv2012}; see Theorem \ref{Th:Couvreur}.

In Section 2 we introduce basic notions and preliminary results concerning the GK curve, AG codes, and  affine variety codes. In Section 3 we compute the maximal intersections between the GK curve and lines, plane conics, and plane cubics. Such information is used in Section 4 to compute the number of  minimum weight codewords of some dual codes from the GK curve.

\section{Preliminary Results}

\subsection{The Giulietti-Korchm\'aros curve}
Let $\ell=p^h$, $p$ prime and $h\geq 1$. Denote by $PG(3,\ell^6)$ the three dimensional projective space over the field $\mathbb{F}_{\ell^6}$ with $\ell^6$ element. The Giulietti-Korchm\'aros curve $\mathcal{GK}$ (GK curve for short) is a non-singular curve in $PG(3,\ell^6)$, introduced in \cite{GK2009}, defined by the affine equations
\begin{equation}
\label{Eq:GK}
\begin{sistema}
Z^{\ell^2-\ell+1}=Y^{\ell^2}-Y\\
Y^{\ell+1}=X^\ell+X
\end{sistema}.
\end{equation}
This curve has genus $g=\frac{(\ell^3+1)(\ell^2-2)}{2}+1$, $\ell^8-\ell^6+\ell^5+1$ $\F_{\ell^6}$-rationals points and a unique point at the infinity $P_\infty$. The set $\mathcal{GK}(\F_{\ell^6})$ of the $\F_{\ell^6}$-rational points splits into two orbits under the action of $Aut(\mathcal{GK})$: the first one is composed by the set $\mathcal{GK}(\F_{\ell^2})$ of the $\F_{\ell^2}$-rational points of $\mathcal{GK}$, coinciding with the intersection between $\mathcal{GK}$ and the plane $Z=0$; the second one is formed by all the points in $\mathcal{GK}(\F_{\ell^6})\setminus\mathcal{GK}(\F_{\ell^2})$. The GK curve is $\F_{\ell^6}$-maximal, that is, it attains the Hasse-Weil bound $|\mathcal{GK}(\F_{\ell^6})|=\ell^6+1+2g\ell^3$; see  \cite[Theorem 5.2.3]{S2009}. Moreover, for $\ell>2$, $\mathcal{GK}$ is not covered by the Hermitian curve (see \cite{GK2009}): this is the first example in the literature of a family of maximal curves with this feature.

An algebraic curve $\mathcal{X}$ contained in a projective space of dimension $n$ is said to be a complete intersection if the ideal $I$ associated with $\mathcal{X}$ is generated by exactly $n-1$ polynomials. The curve $\mathcal{GK}$ is an example of a complete intersection curve in $PG(3,\ell^6)$.

Consider now the function field $\F_{\ell^6}(\mathcal{GK})$ associated with  $\mathcal{GK}$ (see \cite{S2009} for the connection between  function fields and curves) and let $x,y,z\in\F_{\ell^6}(\mathcal{GK})$ be its coordinate functions, which satisfy $y^{\ell+1}=x^{\ell}+x$ and $z^{\ell^2-\ell+1}=y^{\ell^2}-y$. 

A divisor $D$ on $\mathcal{GK}$ is a formal sum $n_1P_1+\cdots+n_rP_r$, where $P_i \in \mathcal{GK}(\mathbb{F}_{q})$, $n_i \in \mathbb{Z}$, $P_i\neq P_j$ if $i\neq j$.
The divisor $D$ is $\mathbb F_q$-rational if it coincides with its image $n_1P_1^q+\cdots+n_rP_r^q$ under the Frobenius map over $\mathbb F_q$.
For a function $f \in \mathbb{F}_q(\mathcal{GK})$, $div(f)$ indicates the divisor of $f$. 

Concerning the functions $x,y,z\in\F_{\ell^6}(\mathcal{GK})$ it is easily proved that
\begin{itemize}
	\item $(x)=(\ell^3+1)P_0-(\ell^3+1)P_{\infty}$,
	\item $(y)=(\ell^2-\ell+1)(\sum_{a: a^{\ell}+a=0}P_{(a,0,0)})-(\ell^3-\ell^2+\ell)P_{\infty}$,
	\item $(z)=\left(\sum_{P\in\mathcal{X}(\F_{\ell^2})\setminus\{P_{\infty}\}}P\right) -\ell P_{\infty}$,
\end{itemize}
where $P_{(a,b,c)}$ denotes the affine point $(a,b,c)$ and $P_0=P_{(0,0,0)}$.

\subsection{Algebraic-Geometric codes}

In this section we introduce some basics notions on AG codes. For a detailed introduction we refer to \cite{S2009}.

Let $\mathcal{X}$ be a projective curve over the finite field $\F_q$, consider the rational function field $\F_q(\mathcal{X})$ and the set $\mathcal{X}(\F_q)$ given by the $F_q$-rational places of $\mathcal{X}$.
Given a $\F_q$-rational divisor $D=\sum_{i=1,\dots,n} m_iP_i$ the Riemann-Roch space associated to $D$ on $\mathcal{X}$ is the vector space $\mathcal{L}(D)$ over $\F_q(\mathcal{X})$ is defined as
\[
\mathcal{L}(D)=\{f\in\F_{q}(\mathcal{X}) \ |\ (f)+D\ge0\}\cup \{0\}.
\]
It is known that this is a finite dimensional $\F_q$-vector space and the exact dimension can be computed using Riemann-Roch theorem.

Consider now the divisor $D=\sum_{P_i\ne P_j}P_i$ where all the $P_i$s have weight one. Let $G$ be another $F_q$-rational divisor such that $supp(G)\cap supp(D)=\emptyset$. Consider the evaluation map 
\[
\begin{split}
e_D:&\mathcal{L}(G)\rightarrow \F_q^n\\
&f\quad \longmapsto e_D(f)=(f(P_1,\dots,f(P_n)).
\end{split}
\]
This map is $\F_q$-linear and it is injective if $n>\deg(G)$.

The AG-code $C_{\mathcal{L}}(D,G)$ associated with the divisors $D$ and $G$ is then defined as $e_D(\mathcal{L}(G))$. It is an $[n,\ell(G)-\ell(G-D),d]_q$ code, where $d\ge \bar{d}^*=n-\deg(G)$ and $\bar{d}^*$ is the so called designed minimum distance of the code. 

The differential code $C_{\Omega}(D,G)$ is defined as 
\[
C_{\Omega}(D,G):=\{(res_{P_1}(\omega),\dots,res_{P_n}(\omega))| \omega\in\Omega(G-D)\},
\]
where $\Omega(G-D)=\{w\in\Omega(\mathcal{X})\, | \, (\omega)\ge G-D\}\cup\{0\}$. The differential code is an $[n,n-\ell(G)+\ell(G-D),d']_q$ code, where $d'\ge d^*=\deg(G)-2g+2$ and $d^*$ denotes the dual designed minimum distance. 

The aim of this paper is to find the minimum distance of some dual Algebraic-Geometric codes. To achieve this goal we will use a number of times the following result which is a byproduct of  \cite[Theorem 3.5]{Couv2012}.
\begin{theorem}
	\label{Th:Couvreur}
	Let $\mathcal{X}$ be a non singular curve which is complete intersection in a projective space of dimension $r$,  $D$  the divisor $D=\sum_{P\in\mathcal{X}\setminus\{P_{\infty}\}}P$, $m\geq 2$  an integer, and $d$  the minimum distance of the code $C(D,mP_{\infty})^{\bot}$. Then
	\begin{enumerate}
		\item $d=m+2$ if and only if $m+2$ points of $\mathcal{X}$ are collinear in $\mathbb{P}^{r}$;
		\item $d=2m+2$ if and only if no $m+2$ points of $\mathcal{X}$ are collinear and there exist $2m+2$ points of $\mathcal{X}$ lying on a plane conic (possibly reducible);
		\item $d = 3m$ if and only if no $m + 2$ points of $\mathcal{X}$ are collinear, no $2m + 2$ points lie on a plane conic, and there exist $3m$ points of $\mathcal{X}$ coplanar and belonging to the  intersection of a cubic curve and a curve of degree $m$ having no common irreducible component;
		\item $d \ge 3m +1$ if and only if no sub-family of the points of $\mathcal{X}$ satisfies one of the three above configurations.
	\end{enumerate}
\end{theorem}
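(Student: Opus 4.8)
The plan is to derive the four cases from Couvreur's general criterion \cite[Theorem 3.5]{Couv2012}, whose engine is a Cayley--Bacharach analysis of the points of $\mathcal{X}$ that can support a minimum weight dual codeword; the work is to make the thresholds $m+2$, $2m+2$, $3m$ explicit. I would begin by passing to differentials. Since $C(D,mP_{\infty})^{\bot}=C_{\Omega}(D,mP_{\infty})$, a nonzero codeword of weight $N$ is the residue vector of a differential $\omega$ with $(\omega)\ge mP_{\infty}-D$ whose polar divisor consists of simple poles at exactly $N$ affine points $Q_1,\dots,Q_N$ and which is regular elsewhere. By Serre duality and Riemann--Roch this is equivalent to saying that $Q_1,\dots,Q_N$ fail, by exactly one, to impose independent conditions on $\mathcal{L}(mP_{\infty})$, so that the relation $\sum_j c_j f(Q_j)=0$ $(f\in\mathcal{L}(mP_{\infty}))$ is unique up to scalar; minimality of the configuration forces every $c_j\neq 0$, i.e. full support. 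Thus $d$ is the least cardinality of a minimal dependent configuration for $\mathcal{L}(mP_{\infty})$ among the affine points.

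The second step is to identify such minimal configurations with complete intersections of plane curves. Here the complete intersection hypothesis on $\mathcal{X}\subseteq\mathbb{P}^{r}$ is essential: it is what lets one match the intrinsic condition ``fail to impose independent conditions on $\mathcal{L}(mP_{\infty})$'' with the extrinsic condition ``fail to impose independent conditions on the plane curves of degree $m$'' cut on a plane $\Pi\cong\mathbb{P}^{2}$ through the points. Granting this dictionary, the classical Cayley--Bacharach theorem applies: if $N=st$ coplanar points of $\mathcal{X}$ arise as the complete intersection in $\Pi$ of two plane curves of degrees $s$ and $t$ with no common component, they fail to impose independent conditions on plane curves of degree $s+t-3$. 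Imposing $s+t-3=m$, i.e. $s+t=m+3$, yields exactly a minimal dependent configuration for $\mathcal{L}(mP_{\infty})$, hence a dual codeword of weight $N=s(m+3-s)$, with full support because Cayley--Bacharach produces a one--dimensional space of relations.

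The thresholds are then read off by minimising $N(s)=s(m+3-s)$ over the admissible degrees $s\ge 1$ (the symmetry $N(s)=N(m+3-s)$ reflecting the interchange of the two curves, so only the smaller degree matters). Since $N(1)=m+2$, $N(2)=2m+2$, $N(3)=3m$, and $N(s)\ge 3m+1$ for $4\le s\le m-1$ once $m$ is not too small, the three smallest values correspond to $s=1$ (a line together with a curve of degree $m+2$ through the points, i.e. simply $m+2$ collinear points), $s=2$ (a possibly reducible conic and a curve of degree $m+1$, i.e. $2m+2$ points on a conic), and $s=3$ (a cubic and a curve of degree $m$ with no common component, i.e. $3m$ points). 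Reducible and degenerate configurations are covered by allowing $\Gamma_{s}$ to split. In the constructive direction this furnishes, case by case, a dual codeword of the stated weight, giving the upper bounds on $d$.

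The main obstacle is the converse in each case: the matching lower bound, namely that if no configuration of the previous type exists then $d$ cannot drop below the next threshold. Equivalently, one must show that a minimal dependent configuration of size $N<N(s+1)$ is forced to lie on a plane curve of degree $\le s$, which is precisely the delicate part of \cite[Theorem 3.5]{Couv2012}. This requires controlling, through the homogeneous ideal of the complete intersection, the degree of the least plane curve through a small dependent set, and then a careful exclusion of intermediate configurations (for instance ruling out that $N$ points with $m+2<N<2m+2$ can be dependent unless $m+2$ of them are already collinear). I expect this Bezout- and Cayley--Bacharach-based exclusion to be the technical heart; once it is in place, specialising Couvreur's criterion delivers the four cases verbatim.
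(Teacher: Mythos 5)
The paper does not prove this statement at all: it is quoted verbatim as ``a byproduct of [Theorem 3.5]'' of Couvreur's paper, with no argument supplied. So there is no internal proof to compare yours against; the only fair comparison is with what a self-contained derivation would require. Your constructive direction is sound and matches the standard picture: identifying dual codewords with differentials having simple poles along a minimally dependent configuration for $\mathcal{L}(mP_{\infty})$, translating (via the complete-intersection hypothesis) dependence with respect to $\mathcal{L}(mP_{\infty})$ into dependence with respect to plane curves of degree $m$, invoking Cayley--Bacharach for a complete intersection of plane curves of degrees $s$ and $t=m+3-s$, and minimising $N(s)=s(m+3-s)$ to read off the thresholds $m+2$, $2m+2$, $3m$. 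The numerology is correct, and your observation that full support follows from minimality of the configuration is the right way to handle it.

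The genuine gap is the one you yourself flag: every ``only if'' direction (and hence item 4 entirely) rests on showing that a dependent configuration of fewer than $N(s+1)$ points must lie on a plane curve of degree at most $s$, and that intermediate cardinalities cannot occur without containing one of the listed configurations. You describe this as ``the delicate part of Couvreur's Theorem 3.5'' and do not attempt it, so as a standalone proof the argument establishes only the upper bounds on $d$ in cases 1--3 and nothing in case 4. A second unproved ingredient is the dictionary between intrinsic dependence (on $\mathcal{L}(mP_{\infty})$) and extrinsic dependence (on degree-$m$ plane curves cut on a plane through the points), which is exactly where the complete-intersection/arithmetically Gorenstein hypothesis does its work and which also lives inside Couvreur's proof. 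Since the authors themselves import the whole statement by citation, your partial reconstruction is, if anything, more explicit than the paper; but it is not a proof, and to close it you would either have to cite Couvreur for the converse directions (as the paper does for the whole theorem) or reproduce his Bezout/Cayley--Bacharach exclusion argument in full.
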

\subsection{Affine variety codes}
We introduce now  affine variety codes, see \cite{FL1998} for further information.

Let $t\ge1$ and consider an ideal $I=\langle g_1,\dots,g_s\rangle$ of $\F_q[x_1,\dots,x_t]$,  $\{x_1^q-x_1,\dots,\, x_t^q-x_t\}\subset I$. The ideal $I$ is zero-dimensional and radical; see \cite{Sei1974}. Let $V(I)=\{P_1,\dots, P_n\}$ be the variety of $I$ and $R=\F_q[x_1,\dots,x_t]/I$.
\begin{definition}
	An affine variety code $C(I,L)$ is the image $\phi(L)$ of $L\subseteq R$, a $\mathbb{F}_q$-vector subspace of $R$ of dimension $r$, given by the isomorphism of $\F_q$-vector spaces:
	\[
	\begin{split}
	\phi:R&\longrightarrow \F_q^n\\
	f&\longmapsto (f(P_1),\dots,f(P_n)).
	\end{split}
	\]
\end{definition}
Let $L$ be generated by $b_1,\dots,\ b_r$, then the matrix
\[
H:=\left(\begin{matrix}
b_1(P_1) & b_1(P_2) & \dots & b_1(P_n)\\
b_2(P_1) & b_2(P_2) & \dots & b_2(P_n)\\
\vdots & \vdots & \vdots & \vdots\\
b_r(P_1) & b_r(P_2) & \dots & b_r(P_n)\\
\end{matrix}\right)
\]
is a generator matrix for $C(I,L)$ and a parity-check matrix for $C^{\bot}(I,L)$.
It is clear that there is a strong connection between  affine variety codes and Algebraic-Geometric codes and that, depending on the choice of $L$, they can coincide.

Since we are interested in computing the number of minimum weight codewords of particular AG codes,  next proposition will give us  a useful criterion.

\begin{proposition}[Marcolla, Pellegrini, Sala, \cite{MPS2016}]
	\label{MPS:sistema}
	Let $1\le w\le n$. Let $I=\langle g_1,\dots,g_s\rangle$ be such that $\{x_1^q-x_1,\dots,x_t^q-x_t\}\subset I$. Let $L$ be a subspace of $\F_{q^2}[x_1,\dots,x_t]/I$ of dimension $r$ generated by $\{b_1,\dots,b_r\}$. Let $J_w$ be the ideal in $\F_{q}[x_{1,1},\dots,x_{1,t},\dots,x_{w,t},z_1,\dots,z_w]$ generated by
	$$
	\begin{array}{ll}
	\sum_{i=1}^wz_ib_j(x_{i,1},\dots,x_{i,t})&\qquad \text{for } j=1,\dots,r,\\
	\\
	g_h(x_{i,1},\dots,x_{i,t})&\qquad \text{for } i=1,\dots,w\, \text{and } h=1,\dots,s,\\
	\\
	z_i^{q-1}-1& \qquad \text{for } i=1,\dots,w,\\
	\\
	\prod_{1\le l \le t}((x_{j,l}-x_{i,l})^{q-1}-1)&\qquad \text{for } 1\le j < i \le w.\\
	\end{array}$$
	
	Then any solution of $J_w$ corresponds to a codeword of $C^{\bot}(I,L)$ of weight $w$. Also, the number $\text{A}_w(C(I,L)^{\bot})$ of codewords of weight $w$ is 
	\[
	\text{A}_w(C(I,L)^{\bot})=\frac{|V(J_w)|}{w!},
	\]
	where $|V(J_w)|$ is the number of distinct solutions of $J_w$.
\end{proposition}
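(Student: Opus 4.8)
The plan is to set up an explicit correspondence between the solution set $V(J_w)$ and the weight-$w$ codewords of $C(I,L)^{\bot}$, show that it is exactly $w!$-to-one, and then read off the counting formula. A point of $V(J_w)$ is a tuple $(Q_1,\dots,Q_w,z_1,\dots,z_w)$ with $Q_i=(x_{i,1},\dots,x_{i,t})$, and the strategy is to interpret each of the four blocks of generators as a separate combinatorial constraint on this tuple.

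First I would decode the generators one block at a time. The generators $g_h(x_{i,1},\dots,x_{i,t})$ force each $Q_i$ to lie in $V(I)=\{P_1,\dots,P_n\}$, so every $Q_i$ is one of the evaluation points. The generators $z_i^{q-1}-1$ force each $z_i$ to be nonzero, using that $a^{q-1}=1$ exactly when $a$ is a nonzero element of $\F_q$. The generators $\prod_{1\le l\le t}((x_{j,l}-x_{i,l})^{q-1}-1)$ vanish precisely when, for each pair $i\neq j$, at least one factor vanishes, i.e. some coordinate of $Q_i$ differs from the corresponding coordinate of $Q_j$; thus these generators are equivalent to requiring $Q_1,\dots,Q_w$ to be pairwise distinct. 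Finally, the generators $\sum_{i=1}^w z_i b_j(Q_i)$ impose, for every $j=1,\dots,r$, the linear relation $\sum_{i=1}^w z_i b_j(Q_i)=0$.

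Next I would pass between solutions and codewords. Given a solution, define $c\in\F_q^n$ by $c_k=z_i$ if $P_k=Q_i$ for the (unique) such $i$, and $c_k=0$ otherwise; this is well defined because the $Q_i$ are distinct. Since $H=(b_j(P_k))_{j,k}$ is a parity-check matrix for $C(I,L)^{\bot}$, the relations $\sum_i z_i b_j(Q_i)=0$ are exactly the equations $Hc^{\top}=0$, so $c$ is a codeword; and because the $Q_i$ are distinct while the $z_i$ are nonzero, the support of $c$ is precisely $\{Q_1,\dots,Q_w\}$, whence $\mathrm{wt}(c)=w$. Conversely, a weight-$w$ codeword has a support consisting of exactly $w$ distinct points carrying nonzero values, and each ordering of that support (together with the matching values) yields a solution of $J_w$; distinct orderings give distinct tuples in $V(J_w)$, yet all of them return the same codeword, so the fibre over each weight-$w$ codeword has size exactly $w!$. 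This gives $|V(J_w)|=w!\cdot\mathrm{A}_w(C(I,L)^{\bot})$, which is the claimed identity.

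The main obstacle is the last equivalence, and it should be handled with some care in two directions. One must check that every solution yields a codeword of weight \emph{exactly} $w$ rather than smaller: this is precisely where distinctness of the $Q_i$ (block four) and nonvanishing of the $z_i$ (block three) are indispensable, since either a repeated point or a vanishing label could collapse the support and drop the weight. One must also verify that the map is \emph{exactly} $w!$-to-one, i.e. that no ordering of the support is missing and that two different orderings never coincide as points of $V(J_w)$, which again relies on the $w$ support points being genuinely distinct. It is worth recording at the start that the field equations built into $J_w$ make the ideal zero-dimensional and radical, so that $|V(J_w)|$ counts honest distinct solutions with no multiplicity, legitimising the division by $w!$.
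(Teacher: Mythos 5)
Your argument is correct: the four blocks of generators are decoded exactly right (points of $V(I)$, nonzero labels in $\F_q^*$, pairwise distinct points, and the parity-check relations $Hc^{\top}=0$), and the support-ordering bijection gives the $w!$-to-one count cleanly. There is nothing in the paper to compare it against, since the authors import this proposition from the cited reference of Marcolla, Pellegrini and Sala without reproducing a proof; your write-up is the standard argument one would expect to find there, and the only dispensable touch is the closing remark on radicality, which is not needed because the statement already counts distinct solutions of $V(J_w)$ rather than multiplicities.
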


\section{Intersection between the GK curve and lines or conics}
In this section we study the possible intersections between a line or a plane conic and the  curve $\mathcal{GK}$ as in \eqref{Eq:GK}. In particular, we are interested in the maximum possible size of their intersections.

\begin{proposition}\label{Prop:lines}
Let $r \subset PG(3,\ell^6)$ be a line. 	Then 
$$|r \cap GK|\leq \ell^2-\ell+1.$$
Also, any $(\ell^2-\ell+1)$-secant is parallel to the $z$-axis and all the $(\ell^2-\ell+1)$ common points are not $\F_{\ell^2}$-rational.
\end{proposition}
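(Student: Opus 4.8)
The plan is to split on whether the line $r$ is parallel to the $z$-axis, i.e.\ whether it passes through the point $Q=(0:0:1:0)$ (the common point at infinity of all lines in the $z$-direction). Before anything else I would record that $Q\notin\mathcal{GK}$: from the principal divisors listed in the excerpt the functions $x,y,z$ have pole orders $\ell^3+1,\ \ell^3-\ell^2+\ell,\ \ell$ at the unique point at infinity, so $x$ dominates and homogenising gives $P_\infty=(1:0:0:0)$. Since $\mathcal{GK}$ meets the plane at infinity only in $P_\infty\ne Q$, every line through $Q$ meets $\mathcal{GK}$ in affine points only, while for the remaining lines I will be free to project away the $z$-coordinate without losing the point $P_\infty$.

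For a line $r$ parallel to the $z$-axis, $X=a_1$ and $Y=a_2$ are constant along $r$ and $Z$ is free. By the second equation of \eqref{Eq:GK} such an $r$ can meet $\mathcal{GK}$ only when $a_2^{\ell+1}=a_1^{\ell}+a_1$, and then the intersection points are exactly the $(a_1,a_2,z)$ with
\[
z^{\ell^2-\ell+1}=a_2^{\ell^2}-a_2=:c,
\]
a polynomial of degree $\ell^2-\ell+1$ in $z$, which gives $|r\cap\mathcal{GK}|\le \ell^2-\ell+1$ immediately. For the equality case I would invoke that $\ell^2-\ell+1$ divides $\ell^6-1$ (indeed $\ell^6-1=(\ell^2-\ell+1)(\ell^2+\ell+1)(\ell^2-1)$), so $z\mapsto z^{\ell^2-\ell+1}$ is exactly $(\ell^2-\ell+1)$-to-one on $\F_{\ell^6}^{*}$. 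Hence $|r\cap\mathcal{GK}|=\ell^2-\ell+1$ forces $c\ne 0$ with $c$ an $(\ell^2-\ell+1)$-th power; in particular $z\ne 0$ at every common point, so all of them lie off the plane $Z=0$ and are therefore not $\F_{\ell^2}$-rational.

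If instead $r$ is not parallel to the $z$-axis, then $Q\notin r$ and I would use the projection $\pi(X:Y:Z:W)=(X:Y:W)$ from $Q$. This is a bijection on $r$, with image a line $\bar r$, and it sends every point of $\mathcal{GK}$ into the Hermitian curve $\mathcal{H}:Y^{\ell+1}=X^{\ell}W+XW^{\ell}$ — the affine points because $y^{\ell+1}=x^{\ell}+x$, and $P_\infty\mapsto(1:0:0)\in\mathcal{H}$. As $\mathcal{H}$ is irreducible of degree $\ell+1$ and $\bar r$ is a line not contained in it, B\'ezout yields $|r\cap\mathcal{GK}|\le|\bar r\cap\mathcal{H}|\le \ell+1$. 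Since $\ell+1\le\ell^2-\ell+1$ for every $\ell\ge 2$, this delivers the bound in all cases; the inequality being strict for $\ell\ge 3$, no line not parallel to the $z$-axis can be an $(\ell^2-\ell+1)$-secant, which proves that every such secant is parallel to the $z$-axis.

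The step I expect to require the most care is the borderline value $\ell=2$, where $\ell+1=\ell^2-\ell+1=3$ and the crude B\'ezout estimate no longer rules out a $3$-secant not parallel to the $z$-axis; closing this would need a finer argument on whether three collinear points of $\mathcal{H}$ can lift to three collinear points of $\mathcal{GK}$ in $PG(3,\ell^6)$ (the range of real interest for the GK curve being $\ell>2$ in any case). The other point to keep honest is the bookkeeping at infinity, namely verifying $Q\notin\mathcal{GK}$ and that $P_\infty$ is genuinely absorbed by $\pi$, so that nothing is miscounted in passing between the affine equations \eqref{Eq:GK} and the projective statement.
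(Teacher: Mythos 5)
Your proof is correct and takes a genuinely different route from the paper's. The paper splits according to whether $r$ contains an $\F_{\ell^2}$-rational point of $\mathcal{GK}$: it parametrizes the line through two intersection points and reduces the second defining equation to a polynomial of degree $\ell$ in the parameter $\lambda$, so that more than $\ell+1$ intersections force both coefficients to vanish, which in turn forces $x_1=x_2$, $y_1=y_2$, i.e.\ a line of the form $X=x_1$, $Y=y_1$. You instead split on the direction of the line: for lines through $Q=(0:0:1:0)$ the bound is immediate from the degree of $z^{\ell^2-\ell+1}=\overline{y}^{\ell^2}-\overline{y}$, and the divisibility $\ell^2-\ell+1\mid \ell^6-1$ cleanly identifies the equality case and the non-$\F_{\ell^2}$-rationality of the common points; for the remaining lines, projecting from $Q$ onto the plane curve $Y^{\ell+1}=X^{\ell}W+XW^{\ell}$ and applying B\'ezout gives the bound $\ell+1$ at once. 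Your argument is more conceptual and avoids the explicit coefficient manipulations; the paper's computation has the side benefit of showing that lines through an $\F_{\ell^2}$-rational point meet $\mathcal{GK}$ in \emph{exactly} $\ell+1$ points, which your B\'ezout estimate only bounds from above (though that extra precision is not needed for this proposition). Your bookkeeping at infinity ($Q\notin\mathcal{GK}$, $P_\infty=(1:0:0:0)\mapsto(1:0:0)$ on the image curve) is sound.

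Regarding the $\ell=2$ caveat you raise: this is not a defect of your proof relative to the paper's. For $\ell=2$ one has $\ell+1=\ell^2-\ell+1=3$, and the paper's own computation shows that a line joining the $\F_4$-rational point $(0,0,0)$ to a generic point of $\mathcal{GK}$ meets the curve in exactly $3$ points; such a line is a $3$-secant that is not parallel to the $z$-axis and contains an $\F_{\ell^2}$-rational point, so the second assertion of the proposition actually fails at $\ell=2$. Both your proof and the paper's implicitly require $\ell\ge 3$ (the range in which $\mathcal{GK}$ is of interest as a non-Hermitian-covered curve), and in that range your argument is complete.
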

\begin{proof}
As already mentioned, the $\mathbb{F}_{\ell^6}$-rational points of $\mathcal{GK}$ are divided into two orbits $\mathcal{O}_1=\mathcal{GK}(\mathbb{F}_{\ell^2})$ and $\mathcal{O}_2=\mathcal{GK}(\mathbb{F}_{\ell^6})\setminus \mathcal{GK}(\mathbb{F}_{\ell^2})$. 

Suppose that $r\cap \mathcal{GK}(\mathbb{F}_{\ell^6})$ contains at least an $\mathbb{F}_{\ell^2}$-rational point $P_1$. Without loss of generality we can assume that  $P_1=(0,0,0)$. Let $P_2=(x,y,z) \in \mathcal{GK}(\mathbb{F}_{\ell^6})\setminus \{ P_1,\ P_{\infty}\}$. This implies $x\neq 0$. An $\mathbb{F}_{\ell^6}$-rational point $P$  on the line $r$ through  $P_1$ and $P_2$ has coordinates 
$$\left(\frac{\lambda x}{1+\lambda},\frac{\lambda y}{1+\lambda},\frac{\lambda z}{1+\lambda}\right),$$
for some $\lambda \in \mathbb{F}_{\ell^6}$. If such a point belongs to  $\mathcal{GK}$ then
$$
\left(\frac{\lambda y}{1+\lambda}\right)^{\ell+1}=\left(\frac{\lambda x}{1+\lambda}\right)^\ell+\frac{\lambda x}{1+\lambda},$$
that is  
$$\lambda^{\ell+1}y^{\ell+1}=\lambda^\ell x^\ell(1+\lambda)+\lambda x(1+\lambda)^\ell.$$

The condition  $y^{\ell+1}=x^{\ell}+x$ yields $\lambda^{\ell}x^\ell+\lambda x=0.$ Therefore it is easily seen that  the number of the intersections between the line $r$ and the GK curve is exactly $\ell+1$.

By direct checking, the same happens for the line  through $P_1$ and $P_{\infty}=(1:0:0:0)$.


Suppose now that $r\cap GK(\mathbb{F}_{\ell^6})$ contains no points of $\mathcal{O}_1$. Let $P_1=(x_1,y_1,z_1), P_2=(x_2,y_2,z_2) \in \mathcal{O}_2$ two points of $r$. An $\mathbb{F}_{\ell^6}$-rational point $P$ of $r$ is 
$$P=\left(\frac{x_1+\lambda x_2}{1+\lambda},\frac{y_1+\lambda y_2}{1+\lambda},\frac{z_1+\lambda z_2}{1+\lambda}\right)$$
for some $\lambda \in \mathbb{F}_{\ell^6}$.
If  $P\in GK$ then by the second equation in \eqref{Eq:GK}
\[
\left(\frac{y_1+\lambda y_2}{1+\lambda}\right)^{\ell+1}=\left(\frac{x_1+\lambda x_2}{1+\lambda}\right)^\ell+\frac{x_1+\lambda x_2}{1+\lambda}.
\]
Recalling that $y_1^{\ell+1} =x_1^\ell+x_1$ and $y_2^{\ell+1} =x_2^\ell+x_2$, we obtain 
\[
\lambda^\ell (x_1+ x_2^\ell -y_1y_2^\ell)+\lambda (x_1^\ell +x_2-y_1^\ell y_2)=0.
\]
If $(x_1+ x_2^\ell -y_1y_2^\ell)\neq 0$ or $(x_1^\ell +x_2-y_1^\ell y_2)\neq 0$  then $|r\cap \mathcal{GK}(\mathbb{F}_{\ell^6})|\leq \ell+1$. On the other hand, if $x_1+ x_2^\ell -y_1y_2^\ell=x_1^\ell +x_2-y_1^\ell y_2=0$ then 
\[
(x_1+x_1^\ell) +(x_2+x_2^\ell) -y_1y_2^\ell-y_1^\ell y_2=0, \qquad y_1^{\ell+1}+y_2^{\ell+1}-y_1y_2^\ell-y_1^\ell y_2=(y_1-y_2)^{\ell+1}=0,
\]
that is $y_1=y_2$. 
Finally, from $x_1+ x_2^\ell -y_1y_2^\ell=0$, we get $x_1=x_2$. This means that if  $|r\cap \mathcal{GK}(\mathbb{F}_{\ell^6})|>\ell+1$ then $r$ has equation $X=x_1$, $Y=y_1$, with $x_1^\ell+x_1=y_1^{\ell+1}$. Clearly $y_1\notin \mathbb{F}_{\ell^2}$ otherwise $P_1$ and $P_2$ belong to $\mathcal{O}_1$. By direct checking, the line $r$ has exactly $\ell^2-\ell+1$ points in common with  the curve $\mathcal{GK}$. 

\end{proof}

\begin{proposition}\label{prop:NumberOfLines}
	The total number of $(\ell^2-\ell+1)$-secants of the $\mathcal{GK}$ is $(\ell+1)(\ell^5-\ell^3)$.
\end{proposition}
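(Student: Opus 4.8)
The plan is to turn the counting problem into a double-counting incidence between the $(\ell^2-\ell+1)$-secants and the points of the second orbit $\mathcal{O}_2=\mathcal{GK}(\F_{\ell^6})\setminus\mathcal{GK}(\F_{\ell^2})$, using the structural information already extracted in Proposition~\ref{Prop:lines}. That proposition tells us that any $(\ell^2-\ell+1)$-secant $r$ is parallel to the $z$-axis, hence of the form $X=x_1$, $Y=y_1$ with $y_1^{\ell+1}=x_1^\ell+x_1$ and $y_1\notin\F_{\ell^2}$, and that all its $\ell^2-\ell+1$ points lie in $\mathcal{O}_2$. So each such secant accounts for exactly $\ell^2-\ell+1$ points of $\mathcal{O}_2$, and the two orbits never mix on these lines.

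First I would prove the converse incidence: every point of $\mathcal{O}_2$ lies on exactly one $(\ell^2-\ell+1)$-secant. Take $P=(x_1,y_1,z_1)\in\mathcal{O}_2$ and let $r_P$ be the line $X=x_1$, $Y=y_1$. By \eqref{Eq:GK} a point $(x_1,y_1,z)$ lies on $\mathcal{GK}$ exactly when $z^{\ell^2-\ell+1}=y_1^{\ell^2}-y_1=z_1^{\ell^2-\ell+1}$, and since $P\in\mathcal{O}_2$ we have $z_1\neq0$. The crucial observation is that $\ell^2-\ell+1$ divides $\ell^6-1$, because $\ell^6-1=(\ell^2-1)(\ell^2-\ell+1)(\ell^2+\ell+1)$; consequently the power map $z\mapsto z^{\ell^2-\ell+1}$ is exactly $(\ell^2-\ell+1)$-to-one on $\F_{\ell^6}^{*}$. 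Hence $z_1^{\ell^2-\ell+1}$ has precisely $\ell^2-\ell+1$ preimages, all nonzero, so $r_P$ meets $\mathcal{GK}$ in exactly $\ell^2-\ell+1$ points, all of them in $\mathcal{O}_2$; that is, $r_P$ is a $(\ell^2-\ell+1)$-secant, and it is clearly the only one through $P$. Combining with Proposition~\ref{Prop:lines}, the vertical fibres partition $\mathcal{O}_2$ into classes of size $\ell^2-\ell+1$, each class being the point set of a unique $(\ell^2-\ell+1)$-secant. Therefore the number of such secants equals $|\mathcal{O}_2|/(\ell^2-\ell+1)$.

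It then remains to evaluate $|\mathcal{O}_2|$. Since $\mathcal{O}_1=\mathcal{GK}(\F_{\ell^2})$ is the intersection of $\mathcal{GK}$ with the plane $Z=0$, imposing $Z=0$ in \eqref{Eq:GK} forces $y\in\F_{\ell^2}$ and leaves the Hermitian relation $y^{\ell+1}=x^\ell+x$; counting its $\F_{\ell^2}$-rational points (together with $P_\infty$) gives $|\mathcal{O}_1|=\ell^3+1$. Subtracting from the maximal value $|\mathcal{GK}(\F_{\ell^6})|=\ell^8-\ell^6+\ell^5+1$ yields $|\mathcal{O}_2|=\ell^8-\ell^6+\ell^5-\ell^3=\ell^3(\ell^2-1)(\ell^3+1)$. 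Dividing by $\ell^2-\ell+1$ and using $\ell^3+1=(\ell+1)(\ell^2-\ell+1)$ gives $\ell^3(\ell^2-1)(\ell+1)=(\ell+1)(\ell^5-\ell^3)$, which is the asserted total.

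The factorizations are routine; the one step deserving genuine care is the $(\ell^2-\ell+1)$-to-one property of the power map, i.e.\ the divisibility $\ell^2-\ell+1\mid\ell^6-1$, since it is precisely what guarantees that a vertical line through an $\mathcal{O}_2$-point is a \emph{full} $(\ell^2-\ell+1)$-secant rather than meeting $\mathcal{GK}$ in fewer rational points. I would also make sure, via Proposition~\ref{Prop:lines}, that distinct secants are never counted twice and that none of them passes through $\mathcal{O}_1$, so that the partition of $\mathcal{O}_2$ is genuine.
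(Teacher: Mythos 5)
Your proposal is correct and follows the same route as the paper: both count the secants by noting that each point of $\mathcal{O}_2$ lies on exactly one $(\ell^2-\ell+1)$-secant and dividing $|\mathcal{O}_2|=\ell^8-\ell^6+\ell^5-\ell^3$ by $\ell^2-\ell+1$. The only difference is that you carefully justify the incidence claim (via the divisibility $\ell^2-\ell+1\mid\ell^6-1$ and the structure of vertical fibres), which the paper simply asserts.
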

\begin{proof}
Recall that $|\mathcal{O}_2|=\ell^8-\ell^6+\ell^5-\ell^3$. Also, each point in $\mathcal{O}_2$ lies on exactly one $(\ell^2-\ell+1)$-secant $r : X=x_1, Y=y_1$ such that $(r\cap \mathcal{GK}(\mathbb{F}_{\ell^6})) \subset \mathcal{O}_2$. Therefore the number of such lines is
	
	\[
	\frac{(\ell^8-\ell^6+\ell^5-\ell^3)}{(\ell^2-\ell+1)}=(\ell+1)(\ell^5-\ell^3).
	\]
\end{proof}





\begin{proposition}\label{Prop:conics}
Let $\mathcal{C}$ be a plane conic in $PG(3,\ell^6)$. Then the size $|\mathcal{C}\cap \mathcal{GK}(\mathbb{F}_{\ell^6})|$ is at most   

$$\quad 
\left\{
\begin{array}{ll}
2(\ell^2-\ell+1)& \textrm{ if } \mathcal{C} \textrm{ reducible},\\
2(\ell+1) & \textrm{ if } \mathcal{C} \textrm{ absolutely irreducible}.\\
\end{array}
\right.
$$	
\end{proposition}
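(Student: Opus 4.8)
My plan is to separate the two cases in the statement and in each reduce the count to the Hermitian-type relation $Y^{\ell+1}=X^\ell+X$, which cuts out a surface $S\subset PG(3,\ell^6)$ of degree $\ell+1$ containing $\mathcal{GK}$ (it is the homogenisation of the second equation of \eqref{Eq:GK}, and is independent of $Z$). If $\mathcal{C}$ is reducible it is the union of two (possibly coincident) lines $r_1,r_2$, and then $\mathcal{C}\cap\mathcal{GK}(\mathbb{F}_{\ell^6})\subseteq(r_1\cap\mathcal{GK})\cup(r_2\cap\mathcal{GK})$, so Proposition \ref{Prop:lines} immediately gives $|\mathcal{C}\cap\mathcal{GK}(\mathbb{F}_{\ell^6})|\le 2(\ell^2-\ell+1)$. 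This disposes of the reducible case.

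For the absolutely irreducible case I would use a parametrisation. Since $\mathcal{C}$ is an absolutely irreducible conic it is rational, and I would fix a proper parametrisation $\psi\colon\mathbb{P}^1\to\mathcal{C}$ (over $\overline{\mathbb{F}_{\ell^6}}$) whose four coordinate functions are binary forms of degree $2$. Substituting $\psi$ into the degree-$(\ell+1)$ equation of $S$ yields a binary form $\Phi$ of degree $2(\ell+1)$. If $\Phi\not\equiv 0$, then, because $\psi$ is a bijection onto $\mathcal{C}$, the points of $\mathcal{C}\cap S$ are in one-to-one correspondence with the roots of $\Phi$ in $\mathbb{P}^1$, of which there are at most $\deg\Phi=2(\ell+1)$. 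As $\mathcal{GK}\subset S$ forces $\mathcal{C}\cap\mathcal{GK}(\mathbb{F}_{\ell^6})\subseteq\mathcal{C}\cap S$, the required bound $2(\ell+1)$ follows.

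It remains to exclude the degenerate possibility $\Phi\equiv 0$, i.e.\ $\mathcal{C}\subset S$, and I expect this to be the only delicate point; everything else is a degree count. Here I would use that $S$ is the cylinder over the Hermitian curve $\mathcal{H}\colon Y^{\ell+1}=X^\ell+X$ with rulings parallel to the $z$-axis, so that the linear projection $(X:Y:Z)\mapsto(X:Y)$ maps $S$ onto $\mathcal{H}$ and has lines as fibres. Restricting this projection to $\mathcal{C}$ would give a morphism from $\mathbb{P}^1\cong\mathcal{C}$ to the irreducible curve $\mathcal{H}$, whose image is therefore either a single point or all of $\mathcal{H}$. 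The first alternative would force $\mathcal{C}$ into a fibre, that is into a line, contradicting that $\mathcal{C}$ is a conic; the second would produce a non-constant morphism from the rational curve $\mathcal{C}$ onto $\mathcal{H}$, which is impossible since $\mathcal{H}$ has genus $\ell(\ell-1)/2>0$ for $\ell\ge 2$ (by L\"uroth's theorem, or Riemann--Hurwitz). Hence $\Phi\equiv 0$ cannot occur, completing the irreducible case.
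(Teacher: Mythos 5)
Your proof is correct, and while the reducible case is word-for-word the paper's argument (split into two lines and apply Proposition \ref{Prop:lines}), your treatment of the absolutely irreducible case takes a genuinely different, more global route. The paper stays inside the plane of $\mathcal{C}$: it distinguishes cases according to whether that plane involves $Z$, uses the plane's equation to eliminate one variable, and then applies B\'ezout in $PG(2,\ell^6)$ to the projected conic (or line) against the Hermitian curve $y^{\ell+1}=x^\ell+x$, recovering at most $2(\ell+1)$ pairs $(x,y)$ and at most one (resp.\ two) value(s) of $z$ per pair. You instead intersect $\mathcal{C}$ directly with the degree-$(\ell+1)$ cylinder $S$ over the Hermitian curve containing $\mathcal{GK}$, via a degree-$2$ parametrisation of $\mathcal{C}\cong\mathbb{P}^1$; both proofs ultimately rest on the same geometric fact, namely that $\mathcal{GK}$ lies on that cylinder. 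What your version buys is the elimination of the case analysis on the plane and an explicit treatment of the degeneration $\mathcal{C}\subset S$ (your L\"uroth/genus argument), a point the paper's B\'ezout applications leave implicit in the ``no common component'' hypothesis; you should, however, record the standard fact that an absolutely irreducible conic is nonsingular, which is what licenses the isomorphism $\psi\colon\mathbb{P}^1\to\mathcal{C}$. What the paper's version buys is that it is more elementary and, not relying on rationality of $\mathcal{C}$, it extends verbatim to plane curves of any degree $\alpha\le\ell$, which is exactly the generalisation stated (without proof) right after the proposition; your parametrisation argument would not cover non-rational curves there.
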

\begin{proof}
Let $\mathcal{C}$ be contained in the plane defined by $G(X,Y,Z)=\alpha X+\beta Y+\gamma Z+\delta=0$. Suppose that $\mathcal{C}$ is absolutely irreducible. 

Suppose $\gamma\neq 0$. The points $P=(x,y,z)$ in $\mathcal{C}\cap \mathcal{GK}(\mathbb{F}_{\ell^6})$ satisfy
	\[
	\begin{sistema}
	z^{\ell^2-\ell+1}=y^{\ell^2}-y\\
	y^{\ell+1}=x^\ell+x\\
	ax^2+by^2+cxy+dx+ey+f=0\\
	G(x,y,z)=0,
	\end{sistema}
	\]
	where $a,b,c,d,e,f,g\in\F_{\ell^6}$. By B\'ezout's Theorem the number of pairs $(x,y)$ satisfying $y^{\ell+1}=x^\ell+x$ and $ax^2+by^2+cxy+dx+ey+f=0$ is at most $2(\ell+1)$. Clearly, for each such pair $(x,y)$ there exists a unique $z$ satisfying $G(x,y,z)=0$. Therefore $|\mathcal{C}\cap \mathcal{GK}(\mathbb{F}_{\ell^6})|\leq 2(\ell+1)$.

Suppose now $\gamma=0$ and $\beta\neq 0$. The points $P=(x,y,z)$ in $\mathcal{C}\cap \mathcal{GK}(\mathbb{F}_{\ell^6})$ satisfy
	\[
	\begin{sistema}
	z^{\ell^2-\ell+1}=y^{\ell^2}-y\\
	y^{\ell+1}=x^\ell+x\\
	ax^2+bz^2+cxz+dx+ez+f=0\\
	G(x,y)=0,
	\end{sistema}
	\]
	where $a,b,c,d,e,f,g\in\F_{\ell^6}$. As above, there are at most $\ell+1$ pairs $(x,y)$ such that  $y^{\ell+1}=x^\ell+x$ and $G(x,y)=0$. Clearly, for each such pair $(x,y)$ there exist at most $2$ values $z$ such that $ax^2+bz^2+cxz+dx+ez+f=0$, since the $\mathcal{C}$ is absolutely irreducible. Therefore $|\mathcal{C}\cap \mathcal{GK}(\mathbb{F}_{\ell^6})|\leq 2(\ell+1)$.
	
The case $\gamma=0$ and $\alpha\neq 0$ is similar and omitted.

If the conic $\mathcal{C}$ splits into two lines, then by Proposition  \ref{Prop:lines} it is clear that $|\mathcal{C}\cap \mathcal{GK}(\mathbb{F}_{\ell^6})|$ is at most  $2(\ell^2-\ell+1)$. Note that if the two lines are $(\ell^2-\ell+1)$-secants then their common point is $(0,1,0,0)\notin \mathcal{GK}$.
\end{proof}

The previous result can be generalized to a plane curve of degree $\alpha\le \ell$.

\begin{proposition}
Let $\mathcal{X}$ be a curve of degree $\alpha\le \ell$ in $PG(3,\ell^6)$. Then the size $|\mathcal{X}\cap \mathcal{GK}(\mathbb{F}_{\ell^6})|$ is at most  
$$\left\{
\begin{array}{ll}
\alpha(\ell^2-\ell+1)& \textrm{ if } \mathcal{X} \textrm{ reducible},\\
\alpha(\ell+1) & \textrm{ if } \mathcal{X} \textrm{ absolutely irreducible}.\\
\end{array}
\right.$$
\end{proposition}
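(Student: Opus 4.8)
The plan is to mirror the structure of the proof of Proposition~\ref{Prop:conics}: first I would establish the bound $\alpha(\ell+1)$ when $\mathcal{X}$ is absolutely irreducible, by projecting $\mathcal{X}$ onto a coordinate plane and applying B\'ezout's theorem, and then I would deduce the reducible bound by decomposing $\mathcal{X}$ into its irreducible components. Throughout, write the plane containing $\mathcal{X}$ as $\pi\colon AX+BY+CZ+D=0$, let $F$ be a polynomial of degree $\alpha$ cutting out $\mathcal{X}$ inside $\pi$, and set $H(x,y):=y^{\ell+1}-x^{\ell}-x$, the Hermitian polynomial coming from the second equation of \eqref{Eq:GK}.

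Suppose first that $\mathcal{X}$ is absolutely irreducible and $C\neq 0$. Then $z$ is an affine-linear function of $x,y$ on $\pi$, so projection along the $Z$-axis restricts to a linear isomorphism of $\pi$ onto $\{Z=0\}$ and carries $\mathcal{X}$ to a plane curve $F'(x,y)=0$ of degree $\alpha$. Since this projection is injective on $\pi$, each point of $\mathcal{X}\cap\mathcal{GK}(\mathbb{F}_{\ell^6})$ yields a distinct common zero of $F'$ and $H$, so $|\mathcal{X}\cap\mathcal{GK}(\mathbb{F}_{\ell^6})|$ is at most the number of such common zeros. The decisive point is that $H$ is absolutely irreducible of degree $\ell+1$ (it defines the Hermitian curve), so $F'$ and $H$ can share a component only if $H\mid F'$; as $\deg F'=\alpha\le\ell<\ell+1$ this is impossible, and B\'ezout's theorem bounds the common zeros by $\alpha(\ell+1)$. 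If instead $C=0$, then $\pi$ is vertical and $(A,B)\neq(0,0)$: the projection of $\mathcal{X}\cap\mathcal{GK}(\mathbb{F}_{\ell^6})$ to the $(x,y)$-plane lies on the intersection of the curve $H=0$ with the line $AX+BY+D=0$, which (again because $H$ is irreducible of degree $\ell+1>1$) meet in at most $\ell+1$ points. Fixing such a pair, the remaining coordinate must satisfy the degree-$\alpha$ equation of $\mathcal{X}$, which for $\alpha\ge 2$ is not identically zero in that coordinate (absolute irreducibility prevents a coordinate line from being a component), and hence admits at most $\alpha$ values. Multiplying gives $|\mathcal{X}\cap\mathcal{GK}(\mathbb{F}_{\ell^6})|\le\alpha(\ell+1)$.

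For the reducible case I would decompose $\mathcal{X}$ into its absolutely irreducible components $\mathcal{X}_1,\dots,\mathcal{X}_k$ of degrees $\alpha_1,\dots,\alpha_k$ with $\sum_i\alpha_i=\alpha$. Every $\mathbb{F}_{\ell^6}$-rational point of $\mathcal{X}$ lies on some $\mathcal{X}_i$, so $|\mathcal{X}\cap\mathcal{GK}(\mathbb{F}_{\ell^6})|\le\sum_i|\mathcal{X}_i\cap\mathcal{GK}(\mathbb{F}_{\ell^6})|$. A component with $\alpha_i=1$ is a line and contributes at most $\ell^2-\ell+1$ by Proposition~\ref{Prop:lines} (a line not defined over $\mathbb{F}_{\ell^6}$ carries at most one rational point, still $\le\ell^2-\ell+1$), while a component with $\alpha_i\ge 2$ contributes at most $\alpha_i(\ell+1)\le\alpha_i(\ell^2-\ell+1)$ by the irreducible case. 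Summing and using $\ell\ge 2$ gives $|\mathcal{X}\cap\mathcal{GK}(\mathbb{F}_{\ell^6})|\le(\ell^2-\ell+1)\sum_i\alpha_i=\alpha(\ell^2-\ell+1)$.

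The main obstacle, and the only place where the hypothesis $\alpha\le\ell$ enters essentially, is guaranteeing that the projected curve $F'$ (respectively the line in the vertical case) shares no component with the Hermitian curve $H$, which is precisely what allows B\'ezout to produce the sharp finite bound $\alpha(\ell+1)$. Establishing (or citing) the absolute irreducibility of $H$ and combining it with the strict degree inequality $\alpha<\deg H=\ell+1$ is the crux; the degree bookkeeping under projection and the passage to irreducible components are then routine.
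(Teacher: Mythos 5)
Your proof is correct and takes exactly the route the paper intends: the paper states this proposition without proof as a direct generalization of Proposition~\ref{Prop:conics}, and your argument is precisely that generalization, with the welcome extra care of justifying B\'ezout's theorem via the absolute irreducibility of $y^{\ell+1}-x^{\ell}-x$ together with the strict degree inequality $\alpha\le\ell<\ell+1$. The only point worth recording is one your vertical-plane case already exposes: the irreducible bound $\alpha(\ell+1)$ genuinely requires $\alpha\ge 2$, since for $\alpha=1$ Proposition~\ref{Prop:lines} produces $(\ell^2-\ell+1)$-secant lines, so this hypothesis is implicit in the statement itself.
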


We conclude this section with the following proposition. 
\begin{proposition}\label{Prop:cubics}
There exist $3(\ell^2-\ell+1)$ coplanar points contained in  $\mathcal{GK}(\mathbb{F}_{\ell^6})$ lying on the intersection between a cubic curve and a curve $\mathcal{Y}$ of degree $\ell^2-\ell+1$.
\end{proposition}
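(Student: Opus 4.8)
The plan is to realize the required configuration using three coplanar $(\ell^2-\ell+1)$-secants. By Proposition~\ref{Prop:lines}, every $(\ell^2-\ell+1)$-secant has the form $r:X=x_0,\ Y=y_0$ with $x_0^\ell+x_0=y_0^{\ell+1}$ and $y_0\notin\mathbb{F}_{\ell^2}$, and it carries exactly $\ell^2-\ell+1$ points of $\mathcal{GK}(\mathbb{F}_{\ell^6})$. Three such lines, if coplanar, contribute $3(\ell^2-\ell+1)$ coplanar points, their union is a (reducible) cubic, and it then remains to produce a curve of degree $\ell^2-\ell+1$ through all of them and to invoke B\'ezout.

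First I would exhibit three coplanar secants. Since all $(\ell^2-\ell+1)$-secants are parallel to the $Z$-axis, three of them are coplanar exactly when their base points $(x_0,y_0)$ are collinear in the $(X,Y)$-plane, so I will fix a single plane $\Pi:X=x_0$ and seek at least three values $y_0$ with $y_0^{\ell+1}=x_0^\ell+x_0=:w$ and $y_0\notin\mathbb{F}_{\ell^2}$. Starting from any point of $\mathcal{O}_2$ (nonempty, since $|\mathcal{O}_2|=\ell^8-\ell^6+\ell^5-\ell^3>0$) I obtain a base point $(x_0,y_0)$ with $y_0\notin\mathbb{F}_{\ell^2}$; the key observation is that then $w=y_0^{\ell+1}\notin\mathbb{F}_\ell$, for otherwise the surjectivity of the norm $\mathbb{F}_{\ell^2}^*\to\mathbb{F}_\ell^*$ together with the fact that the $(\ell+1)$-th roots of unity all lie in $\mathbb{F}_{\ell^2}$ (as $\ell+1\mid \ell^2-1$) would force $y_0\in\mathbb{F}_{\ell^2}$. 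Consequently the $\ell+1$ roots $y_0\zeta$ of $Y^{\ell+1}=w$ (with $\zeta^{\ell+1}=1$) all lie outside $\mathbb{F}_{\ell^2}$, so each line $X=x_0,\ Y=y_0\zeta$ is an $(\ell^2-\ell+1)$-secant contained in $\Pi$. As $\ell+1\ge 3$, I may select three of them, with base values $y^{(1)},y^{(2)},y^{(3)}$.

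Then I would build the two plane curves inside $\Pi$, using affine coordinates $(Y,Z)$ on $\Pi$. The cubic is $\mathcal{C}_3:\ \prod_{j=1}^{3}(Y-y^{(j)})=0$, the union of the three chosen secants. For $\mathcal{Y}$ I take $\mathcal{Y}:\ Z^{\ell^2-\ell+1}-g(Y)=0$, where $g\in\mathbb{F}_{\ell^6}[Y]$ is the interpolating polynomial of degree $\le 2$ with $g(y^{(j)})=(y^{(j)})^{\ell^2}-y^{(j)}$. Each selected point satisfies the $\mathcal{GK}$-relation $Z^{\ell^2-\ell+1}=Y^{\ell^2}-Y$ and has $Y=y^{(j)}$, hence lies on $\mathcal{Y}$, and $\mathcal{Y}$ has degree exactly $\ell^2-\ell+1$ (the top term being $Z^{\ell^2-\ell+1}$). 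The curves $\mathcal{C}_3$ and $\mathcal{Y}$ share no component, because restricting $\mathcal{Y}$ to any line $Y=y^{(j)}$ gives $Z^{\ell^2-\ell+1}-c_j$ with $c_j=(y^{(j)})^{\ell^2}-y^{(j)}\neq 0$, which is not identically zero. By B\'ezout their intersection has at most $3(\ell^2-\ell+1)$ points; since the $3(\ell^2-\ell+1)$ points already produced are pairwise distinct (distinct $Y$-values across the three lines, distinct $Z$-roots on each), they constitute the whole intersection, which proves the statement.

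The main obstacle is the existence step, namely producing three collinear base points with $y\notin\mathbb{F}_{\ell^2}$. I overcome it by reducing collinearity to a single vertical plane and exploiting the multiplicative structure: since the $(\ell+1)$-th roots of unity lie in $\mathbb{F}_{\ell^2}$, the whole fibre $Y^{\ell+1}=w$ lies either inside or outside $\mathbb{F}_{\ell^2}$ according to whether $w\in\mathbb{F}_\ell$, which turns ``finding three collinear points'' into the elementary observation $w\notin\mathbb{F}_\ell$. The remaining verifications (absence of a shared component, distinctness, and B\'ezout) are then routine.
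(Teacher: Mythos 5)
Your proof is correct, and it realizes the same underlying idea as the paper's argument --- a reducible cubic formed by three coplanar $(\ell^2-\ell+1)$-secants --- but through a genuinely different configuration. The paper places the three secants in a plane of constant $Y$-coordinate: it fixes $\overline{y}\in\mathbb{F}_{\ell^6}\setminus\mathbb{F}_{\ell^2}$ and takes three roots $x_1,x_2,x_3$ of $X^\ell+X=\overline{y}^{\ell+1}$. The payoff is that all three secants then carry exactly the same $z$-fibre $\{z_j\}$, so the $3(\ell^2-\ell+1)$ points automatically lie on the $\ell^2-\ell+1$ lines $Y=\overline{y},\ Z=z_j$, and the auxiliary curve $\mathcal{Y}$ is simply this union of lines: no interpolation and no B\'ezout are needed. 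You instead work in a plane of constant $X$-coordinate, which forces you to (i) establish the existence of three admissible $Y$-values via the norm map and the $(\ell+1)$-th roots of unity, and (ii) construct $\mathcal{Y}$ as $Z^{\ell^2-\ell+1}=g(Y)$ with $g$ an interpolating quadratic, then verify degree and the absence of common components by hand. Both routes are sound and both lean on the same fact from the proposition on lines (every line $X=x_0,\ Y=y_0$ with $x_0^\ell+x_0=y_0^{\ell+1}$ and $y_0\notin\mathbb{F}_{\ell^2}$ is a full $(\ell^2-\ell+1)$-secant). Your version is longer but has one concrete advantage: the paper's construction needs three distinct roots of $X^\ell+X=c$, which requires $\ell\geq 3$ since that polynomial has degree $\ell$, whereas your $\ell+1\geq 3$ roots of unity already work for $\ell=2$. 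Two minor trims: the closing appeal to B\'ezout is superfluous, since the statement only asks for $3(\ell^2-\ell+1)$ points lying \emph{on} the intersection, which you have exhibited; and in the common-component check the nonvanishing of $c_j$ is not needed, because $Z^{\ell^2-\ell+1}-c_j$ is a nonzero polynomial in $Z$ in any case.
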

\begin{proof}
Let $\overline{y}\in \mathbb{F}_{\ell^6}\setminus \mathbb{F}_{\ell^2}$. Consider three lines $r_i$ of equations $X=x_i$, $Y=\overline{y}$, $i=1,2,3$, with $x_i^\ell+x_i=\overline{y}^{\ell+1}$. Such three lines are coplanar and $(\ell^2-\ell+1)$-secants; see also Proposition \ref{Prop:lines}. 

Let $\mathcal{X}$ be the plane cubic consisting of the union of $r_1$, $r_2$, and $r_3$. Clearly,  $|\mathcal{X} \cap \mathcal{GK}(\mathbb{F}_{\ell^6})|=3(\ell^2-\ell+1)$.  To conclude the proof we have to show that these points lie on a plane curve of degree $m=\ell^2-\ell+1$.

It is enough to observe that the points in $\mathcal{X} \cap \mathcal{GK}(\mathbb{F}_{\ell^6})$ are
$$(x_i,\overline{y},z_j), \qquad i=1,2,3, \ j=1,\ldots,\ell^2-\ell+1,$$
with $z_j^{\ell^2-\ell+1}=\overline{y}^{\ell^2}-\overline{y}$. Such $3(\ell^2-\ell+1)$ points are contained in the $\ell^2-\ell+1$ lines $s_j$ of equations 

$$s_j := \left\{
\begin{array}{l}
Y=\overline{y}\\
Z=z_j\\
\end{array}
\right..$$

Therefore $\mathcal{X} \cap \mathcal{GK}(\mathbb{F}_{\ell^6})$ is contained also in $\mathcal{Y}=\bigcup_{j=1}^{\ell^2-\ell+1} s_j$. 

\end{proof}

\section{Minimum distance and number of minimum weight codewords of one point codes on the GK curve}

We first determine the minimum distance of the one point AG code $C(D,mP_{\infty})^{\bot}$, where $P_{\infty}=(1,0,0,0)$ and $D=\sum_{P \in \mathcal{GK}(\mathbb{F}_{\ell^6} )\setminus \{P_{\infty}\}} P$.

\begin{proposition}\label{Prop:MinimumDistance}
	The minimum distance $d$ of $C(D,mP_{\infty})^{\bot}$ is
	\begin{enumerate}
		\item $d=m+2$ when $m\le\ell^2-\ell-1$;
		\item $d=2m+2$ when $m=\ell^2-\ell$;
		\item $d=3m$ when $m=\ell^2-\ell+1$;
		\item $d\ge3m+1$ when $m>\ell^2-\ell+1$.
	\end{enumerate} 
\end{proposition}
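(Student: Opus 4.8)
The strategy is to apply Theorem \ref{Th:Couvreur} directly, using the intersection results of Section 3 to verify each of its four numerical conditions for the relevant ranges of $m$. The theorem reduces the computation of $d$ to combinatorial-geometric facts about how many points of $\mathcal{GK}$ can lie on lines, conics, and cubics, all of which have been established.

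For part (1), the case $m\le\ell^2-\ell-1$: by condition (1) of Theorem \ref{Th:Couvreur}, it suffices to exhibit $m+2$ collinear points of $\mathcal{GK}$. Since $m+2\le\ell^2-\ell+1$, Proposition \ref{Prop:lines} guarantees a $(\ell^2-\ell+1)$-secant line (parallel to the $z$-axis), and any $m+2$ of its points form the required collinear configuration, giving $d=m+2$.

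For part (2), the case $m=\ell^2-\ell$: here $m+2=\ell^2-\ell+2$ exceeds the maximal number $\ell^2-\ell+1$ of collinear points of $\mathcal{GK}$ from Proposition \ref{Prop:lines}, so no $m+2$ points are collinear and condition (1) fails. I then invoke condition (2): I must produce $2m+2=2(\ell^2-\ell+1)$ points on a plane conic. Proposition \ref{Prop:conics} shows a reducible conic (a union of two $(\ell^2-\ell+1)$-secants) meets $\mathcal{GK}$ in exactly $2(\ell^2-\ell+1)$ points, so such a configuration exists and $d=2m+2$. For part (3), the case $m=\ell^2-\ell+1$: now $m+2=\ell^2-\ell+3$ again exceeds $\ell^2-\ell+1$, so no $m+2$ points are collinear; and $2m+2=2(\ell^2-\ell+1)+2=2\ell^2-2\ell+4$ exceeds $2(\ell^2-\ell+1)$, the maximal number of points on any conic (by Proposition \ref{Prop:conics}, whether reducible or irreducible), so conditions (1) and (2) both fail. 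Condition (3) then applies, and Proposition \ref{Prop:cubics} furnishes exactly $3m=3(\ell^2-\ell+1)$ coplanar points lying on the intersection of a cubic and a curve of degree $m=\ell^2-\ell+1$ with no common component, yielding $d=3m$. Finally, for part (4), $m>\ell^2-\ell+1$: the same counting shows $m+2>\ell^2-\ell+1$ and $2m+2>2(\ell^2-\ell+1)$, so configurations (1) and (2) are impossible; for configuration (3) one needs $3m$ coplanar points on a cubic, and since any plane component of a cubic is a line meeting $\mathcal{GK}$ in at most $\ell^2-\ell+1$ points, a planar cubic meets $\mathcal{GK}$ in at most $3(\ell^2-\ell+1)<3m$ points, so condition (3) fails as well, leaving $d\ge3m+1$ by condition (4).

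The main obstacle is the verification in part (4), and to a lesser extent the upper bounds underlying parts (2) and (3): the arguments for parts (1)--(3) only require \emph{existence} of the stated configurations, which the propositions supply, but the exclusion of smaller configurations (the ``if and only if'' direction in each case) relies on the \emph{maximality} of the intersection bounds. For parts (2) and (3) this is handled by the strict inequalities above. For part (4), the delicate point is bounding the number of coplanar points on a cubic by $3(\ell^2-\ell+1)$; one must check that no absolutely irreducible plane cubic component can do better than a triple of secant lines, which should follow from a B\'ezout-type argument as in the proof of Proposition \ref{Prop:conics} together with the fact that an absolutely irreducible plane curve of degree $\alpha\le\ell$ meets $\mathcal{GK}$ in at most $\alpha(\ell+1)$ points. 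I expect this to be the step requiring the most care.
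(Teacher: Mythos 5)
Your proposal is correct and follows essentially the same route as the paper: apply Theorem \ref{Th:Couvreur} case by case, using Proposition \ref{Prop:lines} for existence of collinear points, Proposition \ref{Prop:conics} for the conic bound and the reducible-conic configuration, and Proposition \ref{Prop:cubics} for the cubic/degree-$m$ configuration. Your treatment of part (4) is in fact slightly more careful than the paper's (which simply states that none of the previous cases applies), since you explicitly bound the number of points of $\mathcal{GK}$ on a plane cubic by $3(\ell^2-\ell+1)$ via the degree-$\alpha$ generalization of Proposition \ref{Prop:conics}.
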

\begin{proof}
	We apply Theorem \ref{Th:Couvreur}. 
	\begin{enumerate}
		\item By Proposition \ref{Prop:lines} there exist $m+2\leq \ell^2-\ell+1$ collinear points in $\mathcal{GK}$ and therefore the minimum distance is $d=m+2$.
		\item If $m=\ell^2-\ell$ then $m+2= \ell^2-\ell+2$ points of $\mathcal{GK}$ cannot be collinear. Since there exist $2m+2=2(\ell^2-\ell+1)$ points contained in a reducible plane conic (see Proposition \ref{Prop:conics}) the minimum distance is exactly $d=2m+2=2(\ell^2-\ell+1)$.
		\item If $m=\ell^2-\ell+1$ then no line contains $m+2$ points  and  no plane conic contains $2m+2$ points  of $\mathcal{GK}$. By Proposition \ref{Prop:cubics} there exist plane cubics with $3m$ points which are also contained in a curve of degree $m$ having no common components with the cubic. Therefore the minimum distance is $3m=3(\ell^2-\ell+1)$.
		\item If $m>\ell^2-\ell+1$, none of the previous cases applies and therefore the minimum distance is at least  $3m+1$.
		\end{enumerate}
		 
\end{proof}

\begin{corollary}\label{Cor:MinimumDistance}
Let $d^*$ be the designed Goppa minimum distance of $C(D,mP_{\infty})^{\bot}$. The minimum distance $d$ of $C(D,mP_{\infty})^{\bot}$ is

$$d \qquad 
\left\{
\begin{array}{ll}
m+2,& m\le\ell^2-\ell-1;\\
2m+2,&m=\ell^2-\ell;\\
3m, & m=\ell^2-\ell+1;\\
\ge3m+1, & \ell^2-\ell+1<m\leq\ell^2-1;\\
\ge d^*,&m\ge \ell^2.\\
\end{array}
\right.
$$
\end{corollary}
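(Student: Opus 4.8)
The plan is to read the statement as a repackaging of Proposition \ref{Prop:MinimumDistance} together with the classical lower bound coming from the dual designed distance, so that the only genuinely new ingredient is the last row and the sharpening of the fourth row into the finite range $\ell^2-\ell+1<m\le\ell^2-1$. Accordingly I would break the argument at $m=\ell^2$: for $m\le\ell^2-1$ the entries are produced exactly as in Proposition \ref{Prop:MinimumDistance}, whereas for $m\ge\ell^2$ I would appeal only to the universal Goppa estimate.

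For $m\le\ell^2-1$ the first four rows are cases (1)--(4) of Proposition \ref{Prop:MinimumDistance}, which themselves come from Theorem \ref{Th:Couvreur} fed with the intersection data of Section 3. First I would recall the three secant bounds: a line meets $\mathcal{GK}$ in at most $\ell^2-\ell+1$ points (Proposition \ref{Prop:lines}), a plane conic in at most $2(\ell^2-\ell+1)$ points (Proposition \ref{Prop:conics}), and a plane cubic in at most $3(\ell^2-\ell+1)$ points (the degree-$\alpha$ estimate with $\alpha=3$). These make the collinear, conic and cubic configurations of Theorem \ref{Th:Couvreur} unattainable once $m>\ell^2-\ell-1$, $m>\ell^2-\ell$ and $m>\ell^2-\ell+1$ respectively, while Propositions \ref{Prop:lines} and \ref{Prop:cubics} exhibit the extremal configurations realising them at the respective boundaries. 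Reading off the first attainable configuration for each $m$ then yields $m+2$, $2m+2$, $3m$ at the three thresholds and the bound $\ge 3m+1$ on the interval just above, exactly as in the displayed table.

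For $m\ge\ell^2$ I would argue that the geometric method is exhausted and retreat to the universal estimate. Since $C(D,mP_\infty)^{\bot}=C_\Omega(D,mP_\infty)$, the dual designed distance is $d^*=\deg(mP_\infty)-2g+2=m-2g+2$, and the inequality $d\ge d^*$ recalled in Section 2.2 holds unconditionally; this gives the final row at once.

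The hard part will be justifying the cut-off at exactly $m=\ell^2$, that is, delimiting the range of $m$ in which Theorem \ref{Th:Couvreur} genuinely applies to $\mathcal{GK}$. I expect this to be forced by the complete-intersection structure: $\mathcal{GK}$ is cut out by two forms of degrees $\ell+1$ and $\ell^2$, and the hypotheses behind \cite{Couv2012} confine $m$ below the larger degree $\ell^2$. I would therefore devote most of the proof to verifying that Theorem \ref{Th:Couvreur} remains applicable to $C(D,mP_\infty)^{\bot}$ for $m\le\ell^2-1$ but not for $m\ge\ell^2$, after which the two regimes assemble into the table and the corollary follows.
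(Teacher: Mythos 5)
Your handling of the first three rows is fine---they are verbatim from Proposition \ref{Prop:MinimumDistance} and need no further work---but your plan for the last two rows rests on two misconceptions, and the paper's actual proof is a one-line arithmetic comparison that your proposal never reaches. First, your formula $d^*=\deg(mP_\infty)-2g+2=m-2g+2$ is not the designed distance the corollary refers to: in the setting of Theorem \ref{Th:Couvreur} the relevant divisor is $m$ times a hyperplane section of $\mathcal{GK}\subset PG(3,\ell^6)$, a curve of degree $\ell^3+1$, so the paper takes $d^*=m(\ell^3+1)-2g+2=m(\ell^3+1)-\ell^5+2\ell^3-\ell^2+2$ (this is written out explicitly in Section 4). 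With your value $m-2g+2$ the bound $d\ge d^*$ is vacuous for every $m$ in sight and $3m+1\ge d^*$ holds for all $m$, so your argument could never produce a crossover at $m=\ell^2$. Second, the cutoff has nothing to do with the range of validity of Theorem \ref{Th:Couvreur} or with the degrees $\ell+1$ and $\ell^2$ of the forms cutting out $\mathcal{GK}$: Proposition \ref{Prop:MinimumDistance}(4) already gives $d\ge 3m+1$ for \emph{every} $m>\ell^2-\ell+1$, and both lower bounds $d\ge 3m+1$ and $d\ge d^*$ are valid throughout. The only content of the last two rows is which of the two is stronger, and the paper settles this by solving $3m+1\ge m(\ell^3+1)-\ell^5+2\ell^3-\ell^2+2$, i.e.\ $m\le \ell^2-2+\frac{3\ell^2-5}{\ell^3-2}$. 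The ``hard part'' you propose to spend most of the proof on is therefore a red herring: there is nothing to verify about applicability, only an elementary inequality to solve. (Incidentally, since $\frac{3\ell^2-5}{\ell^3-2}<1$ for $\ell\ge 3$, that inequality actually gives $m\le\ell^2-2$ rather than the stated $m\le\ell^2-1$ except when $\ell=2$; but that is an issue with the paper's arithmetic, not one your proposal addresses.)
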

\proof
It is enough to observe that $3m+1$ is larger than the designed minimum distance $d^*= m(\ell^3+1)-\ell^5+2\ell^3-\ell^2+2$ only when 
$$3m +1\ge  m(\ell^3+1)-\ell^5+2\ell^3-\ell^2+2 \iff m\le\ell^2-2+\frac{3\ell^2-5}{\ell^3-2}\iff  m\leq \ell^2-1.$$
\endproof

\subsection{Number of minimum weight codewords}
In this subsection we determine the number of minimum weight codewords in  $C(D,mP_{\infty})^{\bot}$   in the case $\ell-1\le m\le 2(\ell-1)$.

Recall that for  the code $C(D,mP_{\infty})^{\bot}$  the designed Goppa minimum distance is

\[
d^*= \deg \left(mP_{\infty}\right)-2g(GK)+2= m(\ell^3+1)-\ell^5+2\ell^3-\ell^2+2.
\]


Consider the ideal $I=\langle Z^{\ell^2-\ell+1}-Y^{\ell^2}+Y,Y^{\ell+1}-X^\ell-X,X^{\ell^6}-X,Y^{\ell^6}-Y,Z^{\ell^6}-Z\rangle$ and let $R=\F_{\ell^6}[x,y,z]/I$. Also, let  
\[
\mathcal{B}_{\ell,m}=\left\{X^iY^jZ^k+I\ |\ i \in [0,\dots, \ell-1],\ j \in [0,\dots, \ell^2-\ell],\ k \in [0,\dots, m]\right\}
\]
and  $L=\langle \mathcal{B}_{\ell,m} \rangle\subseteq R$. By \cite{FG2010}, $\mathcal{B}_{\ell,m}$ is a basis for the Riemann-Roch space $\mathcal{L}(mP_{\infty})$.


To count  the exact number of the minimum weight codewords of $C^{\bot}(D,mP_{\infty})$ we use Proposition \ref{MPS:sistema}. Let $w\geq d\left(C(D,mP_{\infty})^{\bot}\right)$. Using the same notations, we consider the  ideal $J_w$ given by 
\[
\begin{split}
	J_w=\bigg\langle &\left\{\sum_{i=1}^{w}u_i X _i^rY_i^sZ_i^t\right\}_{ X ^rY^sZ^t\in\mathcal{B}_{\ell,m}} , \left\{Z_i^{\ell^2-\ell+1}-Y_i^{\ell^2}+Y_i \right\}_{i=1,\dots, w} , \left\{ Y_i^{\ell+1}- X _i^\ell- X _i \right\}_{i=1,\dots, w} , \\
	&\left\{  X _i^{\ell^6-1}-1 \right\}_{i=1,\dots, w} , \left\{ Y_i^{\ell^6-1}-1 \right\}_{i=1,\dots, w} , 
    \left\{ Z_i^{\ell^6-1}-1 \right\}_{i=1,\dots, w} , \\
    &\left\{ (( X _i- X _j)^{\ell^6-1}-1)((Y_i-Y_j)^{\ell^6-1}-1)((Z_i-Z_j)^{\ell^6-1}-1) \right\}_{1\le i < j \le w} \bigg\rangle .
    \end{split}
\]

A point in $V(J_{w})$ is a $4w$-tuple 
$$(\bar{x}_1,\ldots,\bar{x}_w,\bar{y}_1,\ldots,\bar{y}_w,\bar{z}_1,\ldots,\bar{z}_w,\bar{u}_1,\ldots,\bar{u}_w)\in \mathbb{F}_{\ell^6}^{4w}$$
which corresponds to a set of $w$ points $(\bar{x}_i,\bar{y}_i,\bar{z}_i)$, $i=1,\ldots,w$, in $\mathcal{GK}(\mathbb{F}_{\ell^6})$.

\begin{theorem}\label{Th:Conta1}
	Let $\ell-1\le m\le 2(\ell-1)$. The number of minimum weight codewords in $C(D,mP_{\infty})^{\bot}$ is
	\[
	A_{d}(C(D,mP_{\infty})^{\bot})=(\ell+1)(\ell^5-\ell^3)(\ell^6-1)\binom{\ell^2-\ell+1}{d}.
	\]
\end{theorem}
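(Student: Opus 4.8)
The plan is to reduce the count to a sum over geometric configurations and then evaluate each configuration by a Vandermonde argument. Since $\ell-1\le m\le 2(\ell-1)$ one checks (for $\ell\ge 3$) that $m\le \ell^2-\ell-1$, so Proposition \ref{Prop:MinimumDistance}(1) gives $d=m+2$. By Proposition \ref{MPS:sistema} we have $A_d=|V(J_d)|/d!$, where a point of $V(J_d)$ is an ordered $d$-tuple of distinct points $P_i=(\bar x_i,\bar y_i,\bar z_i)\in\mathcal{GK}(\mathbb{F}_{\ell^6})$ together with nonzero weights $\bar u_i$ solving $\sum_{i} \bar u_i\, b(P_i)=0$ for every $b\in\mathcal{B}_{\ell,m}$. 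Each unordered support $S$ of size $d$ equipped with a nonzero solution gives exactly $d!$ ordered tuples, so after dividing by $d!$ it is equivalent to count, over all admissible $S$, the number of codewords of full support $S$; this is the quantity I would compute.

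The core of the proof is that the admissible supports are precisely the $d$-subsets of the $(\ell^2-\ell+1)$-secants of Proposition \ref{Prop:lines}, i.e. the lines $X=x_1,\ Y=y_1$ with $x_1^\ell+x_1=y_1^{\ell+1}$ and $y_1\notin\mathbb{F}_{\ell^2}$. On such a secant $x,y$ are constant and nonzero, so each monomial $X^iY^jZ^k\in\mathcal{B}_{\ell,m}$ evaluates to $x_1^iy_1^j\,z^k$; since the $Z$-exponents occurring in $\mathcal{B}_{\ell,m}$ run through $0,1,\dots,m$, the whole system collapses to the $(m+1)\times d$ Vandermonde system $\sum_i \bar u_i\,\bar z_i^{\,k}=0$, $k=0,\dots,m$, in the distinct $z$-coordinates of the $d=m+2$ chosen points. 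This matrix has rank $m+1$, so its kernel is one-dimensional; by the divided-difference identity the kernel vector $\bar u_i=\prod_{j\ne i}(\bar z_i-\bar z_j)^{-1}$ has all entries nonzero, hence every representative has full support. Therefore each of the $\binom{\ell^2-\ell+1}{d}$ subsets of a fixed secant supports exactly a one-dimensional space of codewords, that is $\ell^6-1$ nonzero ones.

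To assemble the total I would multiply by the number $(\ell+1)(\ell^5-\ell^3)$ of such secants provided by Proposition \ref{prop:NumberOfLines}. The remark at the end of the proof of Proposition \ref{Prop:conics}, namely that two distinct $(\ell^2-\ell+1)$-secants meet only at $(0,1,0,0)\notin\mathcal{GK}$, guarantees that distinct secants share no rational point of the curve, so no support of size $d\ge 2$ is counted twice, and distinct subsets of one secant give codewords with distinct supports. This yields the value $(\ell+1)(\ell^5-\ell^3)(\ell^6-1)\binom{\ell^2-\ell+1}{d}$, at least as a lower bound for $A_d$.

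The main obstacle is the completeness (upper) bound: showing that no further $d$-subset supports a minimum-weight codeword. I would first invoke Theorem \ref{Th:Couvreur}, which links the equality $d=m+2$ to collinear configurations, to reduce to supports lying on a single line; Proposition \ref{Prop:lines} then leaves only a $(\ell^2-\ell+1)$-secant (already counted) or, in the boundary case $m=\ell-1$ with $d=\ell+1$, an ordinary $(\ell+1)$-secant not parallel to the $z$-axis. Ruling out the latter is the delicate point: there $x,y$ are not both constant, so the monomials $X^iY^jZ^k$ no longer reduce to powers of $z$, and one must show that the abundance of basis functions separates the $\ell+1$ points, forcing the evaluation matrix to have full rank $\ell+1$ and hence no full-support dependence. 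Making this rank statement precise in every sub-case, together with the justification that \emph{every} minimum-weight codeword is genuinely supported on collinear points (which needs the finer form of Couvreur's result rather than merely the value of $d$), is where the real difficulty lies.
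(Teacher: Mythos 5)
Your lower-bound half is correct and coincides with the paper's computation: restrict to a $(\ell^2-\ell+1)$-secant, observe that the system collapses to a Vandermonde system in the $z$-coordinates with one-dimensional kernel and full support, and multiply $(\ell+1)(\ell^5-\ell^3)$ secants by $\binom{\ell^2-\ell+1}{d}$ supports and $\ell^6-1$ scalars. But the statement is an equality, and the upper bound --- that \emph{no other} $d$-subset of $\mathcal{GK}(\mathbb{F}_{\ell^6})$ supports a minimum-weight codeword --- is exactly the part you leave open, so the proof is genuinely incomplete. Your proposed route for it does not work as stated: Theorem \ref{Th:Couvreur} only characterizes the \emph{value} of the minimum distance in terms of the existence of certain point configurations; it says nothing about the support of an individual minimum-weight codeword, so you cannot invoke it to conclude that every such support is collinear. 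You correctly sense this (``needs the finer form of Couvreur's result''), and you also correctly isolate the residual danger in the boundary case $m=\ell-1$, $d=\ell+1$, where Proposition \ref{Prop:lines} allows $(\ell+1)$-secants through $\mathbb{F}_{\ell^2}$-rational points that are not parallel to the $z$-axis; but identifying the obstacle is not the same as overcoming it.

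The paper closes this gap by working directly with the ideal $J_d$ rather than with any geometric reduction. Writing the equations $\sum_i \bar{x}_i^r\bar{y}_i^s\bar{z}_i^t\bar{u}_i=0$ for all exponents occurring in $\mathcal{B}_{\ell,m}$, one groups the points by their $y$-coordinate: if $\alpha>1$ values occur, the equations with $s=0,\dots,\alpha-1$ (available since $\alpha\le d\le 2\ell\le\ell^2-\ell$) form Vandermonde systems forcing the grouped sums $U_i$ and $X_i^{r,s}$ to vanish; iterating the same argument inside a smallest group on the $x$-coordinates (at most $\ell-1$ distinct values, so the exponents $r\le\ell-1$ suffice) and then on the $z$-coordinates (at most $\ell-1$ nodes against $d-1\ge\ell$ equations) forces some $\bar{u}_i=0$, a contradiction. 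Hence $\alpha=1$, and symmetrically all $\bar{x}_i$ coincide, so the support lies on a vertical $(\ell^2-\ell+1)$-secant; this simultaneously disposes of your problematic $(\ell+1)$-secants. This cascade of Vandermonde eliminations is the substantive content of the theorem, and it is precisely the ``rank statement in every sub-case'' you defer.
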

\begin{proof}

By Proposition \ref{MPS:sistema}, we have to count the number $d$-uples 
$$(\bar{x}_1,\ldots,\bar{x}_d,\bar{y}_1,\ldots,\bar{y}_d,\bar{z}_1,\ldots,\bar{z}_d,\bar{u}_1,\ldots,\bar{u}_d)\in \mathbb{F}_{\ell^6}^{4d}$$
which differ in the first $3d$ coordinates, and such that $\bar{z}_i^{\ell^2-\ell+1}=\bar{y}_i^{\ell^2}-\bar{y}_i$, $\bar{y}_i^{\ell+1}=\bar{x}_i^\ell+\bar{x}_i$,  and 
\begin{equation}\label{Sistema1}
	\begin{sistema}
		\bar{u}_1+\dots+\bar{u}_d=0\\
		\bar{x}_1\bar{u}_1+\dots+\bar{x}_d\bar{u}_d=0\\
		\bar{y}_1\bar{u}_1+\dots+\bar{y}_d\bar{u}_d=0\\
		\bar{z}_1\bar{u}_1+\dots+\bar{z}_d\bar{u}_d=0\\
		\vdots\\
		\bar{x}_1^{\ell-1}\bar{y}_1^{\ell^2-\ell}{z}_1^{d-2}\bar{u}_1+\dots+{x}_d^{\ell-1}\bar{y}_d^{\ell^2-\ell}{z}_d^{d-2}\bar{u}_d=0.\\
	\end{sistema}
\end{equation}
To each tuple $(\bar{x}_1,\ldots,\bar{x}_d,\bar{y}_1,\ldots,\bar{y}_d,\bar{z}_1,\ldots,\bar{z}_d,\bar{u}_1,\ldots,\bar{u}_d)$ we can associate $d$ points $(\bar{x}_i,\bar{y}_i,\bar{z}_i)$, $i=1,\ldots,d$, in $\mathcal{GK}(\mathbb{F}_{\ell^6})$. Suppose that the number of different values $\bar{y}_i$ is $\alpha\leq d\leq \ell^2-\ell$. Without loss of generality let  $\bar{y}_{1},\ldots,\bar{y}_{\alpha}$ be pairwise distinct. 

Suppose $\alpha>1$. Let $I_{i}=\{\bar{y}_j : \bar{y}_j  = \bar{y}_i\}$, for $i=1,\ldots,\alpha$. We may  suppose $|I_1|\leq |I_2|\leq \ldots \leq |I_{\alpha}|$ and let  $\beta=|I_1|$. Note that since $d\leq 2(\ell-1)$ then $\beta\leq d/2\leq \ell-1$. 
System \eqref{Sistema1}  contains the equations 
\[
	\begin{array}{lll}
		\bar{y}_1^r\bar{u}_1+\dots+\bar{y}_d^r\bar{u}_d&=&0,\\
		\bar{x}_1\bar{y}_1^r\bar{u}_1+\dots+\bar{x}_d\bar{y}_1^r\bar{u}_d&=&0,\\
		\vdots\\
		\bar{x}_1^{\ell-1}\bar{y}_1^{r}\bar{u}_1+\dots+\bar{x}_d^{\ell-1}\bar{y}_1^{r}\bar{u}_d&=&0,\\
	\end{array}
\]
for $r=0,\ldots,\alpha-1$. 
Let us define for $i=1,\ldots,\alpha$, $U_i= \sum_{j \ : \ y_j=y_i} u_j$ and $X_i^{r,s}= \sum_{j \ : \ y_j=y_i} \bar{x}_j^r\bar{z}_j^su_j$, $r=0,\ldots,\ell-1$, $s=0,\ldots,d-2$.  The above set of equations can be written as 
\[
	\begin{sistema}
		U_{1}+\cdots+U_{\alpha}=0\\
		\bar{y}_1U_{1}+\cdots+\bar{y}_\alpha U_{\alpha}=0\\
		\bar{y}_1^2U_{1}+\cdots+\bar{y}_\alpha^2U_{\alpha}=0\\
		\vdots\\
		\bar{y}_1^{\alpha-1} U_{1}+\cdots+\bar{y}_\alpha^{\alpha-1} U_{\alpha}=0\\
	\end{sistema},
	\qquad 
	\begin{sistema}
		X_{1}^{r,s}+\cdots+X_{\alpha}^{r,s}=0\\
		\bar{y}_1X_{1}^{r,s}+\cdots+\bar{y}_\alpha X_{\alpha}^{r,s}=0\\
		\bar{y}_1^2X_{1}^{r,s}+\cdots+\bar{y}_\alpha^2 X_{\alpha}^{r,s}=0\\
		\vdots\\
		\bar{y}_1^{\alpha-1} X_{1}^{r,s}+\cdots+\bar{y}_\alpha^{\alpha-1} X_{\alpha}^{r,s}=0\\
	\end{sistema}.
\]
Each of the previous systems is a Vandermonde system with the same coefficients. Since $\bar{y}_1,\ldots,\bar{y}_{\alpha}$ are pairwise distinct the unique solutions are $U_1=\cdots=U_{\alpha}=X_{1}^{r,s} =\cdots=X_{\alpha}^{r,s}=0$. 

Among $A := \{\bar{x}_i : \bar{y}_i=\bar{y}_1\}$ the number of distinct elements is at most $\gamma\leq \beta\leq \ell-1$. Suppose $\gamma>1$. Let $A=\{x_{i_1},\ldots, x_{i_\beta}\}$. Consider the systems $U_1=X_1^{1,0}=\cdots =X_1^{\gamma-1,0}=0$, \ldots, $U_1=X_1^{1,d-2}=\cdots =X_1^{\gamma-1,d-2}=0$.

Let $J_j= \{k \ : \ \bar{x}_{i_k} \in A,  \ \bar{x}_{i_k}=\bar{x}_{i_j}\}$  and $V_j^{r}= \sum_{k \in J_j} \bar{z}_{i_k}^r u_{i_k}$, $j=1,\ldots,\gamma$, $r=0,\ldots,d-2$. We may suppose that $x_{i_1},\ldots,x_{i_\gamma}$ are pairwise distinct. The previous systems can be written as 
\[
	\begin{sistema}
		V_1^r+\cdots+V_{\gamma}^r=0\\
		\sum_{j=1}^{\gamma}   \bar{x}_{i_j}V_j^r=0\\
		\sum_{j=1}^{\gamma}   \bar{x}_{i_j}^1V_j^r=0\\
		\vdots\\
		\sum_{j=1}^{\gamma}   \bar{x}_{i_j}^{\gamma-1}V_j^{r}=0\\
	\end{sistema}.
	\]
Since $x_{i_1},\ldots,x_{i_\gamma}$ are pairwise distinct the above system has as unique solutions 
$V_1^r=\cdots=V_\gamma^r=0$, $r=0,\ldots,d-2$. Since $(\bar{x}_{i_1},\bar{y}_{i_1})=\cdots =(\bar{x}_{i_\gamma},\bar{y}_{i_\gamma})$, all the values $\bar{z}_{i_1},\ldots,\bar{z}_{i_\gamma}$ must be pairwise distinct. Therefore, from 
\[
	\begin{sistema}
		u_{i_1}+\cdots+u_{i_\gamma}=0\\
		\bar{z}_{i_1}u_{i_1}+\cdots+\bar{z}_{i_\gamma}u_{i_\gamma}=0\\
		\bar{z}_{i_1}^2u_{i_1}+\cdots+\bar{z}_{i_\gamma}^2u_{i_\gamma}=0\\
		\vdots\\
		\bar{z}_{i_1}^{d-2}u_{i_1}+\cdots+\bar{z}_{i_\gamma}^{d-2}u_{i_\gamma}=0,\\
	\end{sistema}
\]
we get that the unique solution is $u_{i_1}=\cdots=u_{i_\gamma}=0$, a contradiction.

This shows that $\alpha=1$, that is $\bar{y}_1=\cdots=\bar{y}_{d}$. Using a similar argument we can prove that $\bar{x}_1=\cdots=\bar{x}_{d}$ and therefore the values $\bar{z}_i$, $i=1,\ldots,d$, are pairwise distinct. In other words,  all the points $(\bar{x}_i,\bar{y}_i,\bar{z}_i)$, $i=1,\ldots,d$, lie on a fixed line parallel to the $z$-axis. We conclude the proof computing the exact number of solution of System \eqref{Sistema1}. Since $\bar{x}_1=\cdots=\bar{x}_{d}$ and $\bar{y}_1=\cdots=\bar{y}_{d}$  this system reduces to

\begin{equation}\label{Sistema2}
\begin{sistema}
\bar{u}_1+\dots+\bar{u}_d=0\\
\bar{z}_1\bar{u}_1+\dots+\bar{z}_d\bar{u}_d=0\\
\vdots\\
\bar{z}_1^{d-2}\bar{u}_1+\dots+\bar{z}_d^{d-2}\bar{u}_d=0.\\
\end{sistema}
\end{equation}

By Proposition \ref{prop:NumberOfLines}, we have $(\ell+1)(\ell^5-\ell^3)$ different choices for the $\ell^2-\ell+1$ secant line $r$; we need $d$ points $P_i=(x_i,y_i,z_i)$, $i\in\{1,\dots,d\}$, on $r$ up to permutations. So the total number of $d$-tuples of points is
\[
(\ell+1)(\ell^5-\ell^3)\binom{\ell^2-\ell+1}{d}d!
\]
The matrix of System \eqref{Sistema2} is a Vandermonde matrix and the solution space has linear dimension $1$:  the number of the $u_i$'s is $|\F_{\ell^6}^*|=\ell^6-1$ and finally
\[
A_d= \frac{(\ell+1)(\ell^5-\ell^3)(\ell^6-1)\binom{\ell^2-\ell+1}{d}d!}{d!}=(\ell+1)(\ell^5-\ell^3)(\ell^6-1)\binom{\ell^2-\ell+1}{d}.
\]

\end{proof}

In the case $2(\ell-1)<m<\ell^2-\ell-1$ we can give a lower bound on the number of minimum weight codewords. If we consider  $d$ collinear points of the type $(\bar{x},\bar{y},\bar{z}_i)$, $i=1,\ldots,d$, then  System \eqref{Sistema1}  collapses to System \eqref{Sistema2} (note that the $\bar{z}_i$s must be pairwise distinct) and therefore the number of the corresponding $u_i$'s is $|\F_{\ell^6}^*|=\ell^6-1$. Using again Proposition  \ref{prop:NumberOfLines} we can prove the following.

\begin{theorem}
	Let $2(\ell-1)< m< \ell^2-\ell-1$. The number of minimum weight codewords in $C(D,mP_{\infty})^{\bot}$ is at least: 
	\[
	A_{d}(C(D,mP_{\infty})^{\bot})\ge(\ell+1)(\ell^5-\ell^3)(\ell^6-1)\binom{\ell^2-\ell+1}{d}.
	\]
\end{theorem}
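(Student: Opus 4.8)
The plan is to reuse, in one direction only, the final counting step from the proof of Theorem~\ref{Th:Conta1}. Since $m<\ell^2-\ell-1$, Proposition~\ref{Prop:MinimumDistance}(1) gives that the minimum distance is $d=m+2$, and Proposition~\ref{MPS:sistema} expresses the number of minimum weight codewords as $A_d=|V(J_d)|/d!$. Instead of computing $|V(J_d)|$ exactly, I would isolate the solutions of $J_d$ whose $d$ associated points are collinear, count precisely this subfamily, and divide by $d!$; since in this range of $m$ it is a proper subset of $V(J_d)$, what comes out is a lower bound for $A_d$.

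The construction is the one already indicated before the statement. First I would fix an $(\ell^2-\ell+1)$-secant $r$, which by Proposition~\ref{Prop:lines} necessarily has the form $X=\bar x$, $Y=\bar y$ with $\bar y\notin\mathbb{F}_{\ell^2}$ (so $\bar x,\bar y\neq 0$), and choose $d$ pairwise distinct points $(\bar x,\bar y,\bar z_l)$ on it. Because these points share their first two coordinates, each generator $\sum_l u_l\bar x^i\bar y^j\bar z_l^{\,k}$ of System~\eqref{Sistema1} factors as $\bar x^i\bar y^j\bigl(\sum_l u_l\bar z_l^{\,k}\bigr)$ with $\bar x^i\bar y^j\neq 0$, so the whole system collapses to System~\eqref{Sistema2}, namely $\sum_{l=1}^{d}u_l\bar z_l^{\,k}=0$ for $k=0,\dots,d-2$. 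Here one uses $d-2=m$, which guarantees that the powers $Z^0,\dots,Z^{d-2}$ indeed all occur in the basis $\mathcal{B}_{\ell,m}$. As the $\bar z_l$ are pairwise distinct this is a Vandermonde system of rank $d-1$, with a one-dimensional solution space; and any nonzero solution has all $u_l\neq0$ (otherwise deleting a vanishing coordinate leaves a square, full-rank Vandermonde system forcing the whole vector to vanish), so each of its $\ell^6-1$ nonzero solutions gives a codeword of weight exactly $d$. One also checks routinely that these tuples genuinely lie in $V(J_d)$: the points are on $\mathcal{GK}$, have nonzero coordinates, and are distinct, so the curve equations, the $\cdot^{\ell^6-1}-1$ generators, and the distinctness products all vanish.

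It then remains to assemble the count exactly as in Theorem~\ref{Th:Conta1}. By Proposition~\ref{prop:NumberOfLines} there are $(\ell+1)(\ell^5-\ell^3)$ such secants; each contributes $\binom{\ell^2-\ell+1}{d}$ unordered $d$-subsets of points, hence $\binom{\ell^2-\ell+1}{d}\,d!$ ordered tuples, and each ordered tuple carries $\ell^6-1$ admissible vectors $(u_1,\dots,u_d)$. Therefore $V(J_d)$ contains at least $(\ell+1)(\ell^5-\ell^3)(\ell^6-1)\binom{\ell^2-\ell+1}{d}\,d!$ points of this collinear type, and dividing by $d!$ yields the claimed inequality. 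No collinear configuration is missed or double counted: every line carrying $d\ge\ell+2$ points must be an $(\ell^2-\ell+1)$-secant by Proposition~\ref{Prop:lines}, while $d<\ell^2-\ell+1$ ensures the binomial coefficient is meaningful and positive.

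The place where I expect nothing deep but which must be acknowledged is why only $\ge$ can be claimed. The equality in Theorem~\ref{Th:Conta1} came from the coalescence-into-Vandermonde argument that forced $\alpha=1$; that argument required the repetition multiplicities $\beta$ and $\gamma$ to be at most $\ell-1$, i.e. $d\le 2(\ell-1)$. For $m>2(\ell-1)$ one has $d>2\ell$, so that hypothesis fails and minimum weight codewords supported on non-collinear point sets can no longer be excluded; hence the bound remains one-sided.
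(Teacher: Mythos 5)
Your proposal is correct and follows essentially the same route as the paper: restrict to $d$ points on a single $(\ell^2-\ell+1)$-secant, observe that System~\eqref{Sistema1} collapses to the Vandermonde System~\eqref{Sistema2} whose solution space is one-dimensional with all coordinates nonzero, and then count via Proposition~\ref{prop:NumberOfLines} and divide by $d!$. The details you add (why $\bar x,\bar y\neq 0$, why all $u_l\neq 0$, why equality fails for $d>2(\ell-1)$) are accurate elaborations of the paper's brief argument.
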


\section*{Acknowledgements}
This research was partially supported by Ministry for Education, University
and Research of Italy (MIUR) (Project PRIN 2012 ``Geometrie di Galois e
strutture di incidenza'' - Prot. N. 2012XZE22K$_-$005)
 and by the Italian National Group for Algebraic and Geometric Structures
and their Applications (GNSAGA - INdAM).
The second author would like to thank his supervisor, Massimiliano Sala, for the helpful suggestions.

\begin{flushleft}
Daniele Bartoli\\
Department of Mathematics and Computer Science,\\
University of Perugia,\\
e-mail: {\sf daniele.bartoli@unipg.it}
\end{flushleft}

\begin{flushleft}
Matteo Bonini\\
Department of Mathematics,\\
University of Trento,\\
e-mail: {\sf matteo.bonini@unitn.it}
\end{flushleft}

\end{document}